\numberwithin{equation}{section}
\newtheorem{theorem}{Theorem}[section]
\newtheorem{proposition}[theorem]{Proposition}
\newtheorem{lemma}[theorem]{Lemma}
\theoremstyle{definition}
\newtheorem{definition}[theorem]{Definition}
\newtheorem{example}[theorem]{Example}
\newtheorem{remark}[theorem]{Remark}
\newcommand{\al}{\alpha}
\newcommand{\ardma}{\al_{\rd/\fa}}
\newcommand{\ardmf}{\al_{\rd/\<f\>}}
\newcommand{\aut}{\operatorname{aut}}
\newcommand{\ba}{\mathbf{a}}
\newcommand{\bm}{\mathbf{m}}
\newcommand{\bn}{\mathbf{n}}
\newcommand{\bo}{\boldsymbol{\omega}}
\newcommand{\bs}{\mathbf{s}}
\newcommand{\bt}{\mathbf{t}}
\newcommand{\bu}{\mathbf{u}}
\newcommand{\bxi}{\boldsymbol{\xi}}
\newcommand{\CC}{\mathbb{C}}
\newcommand{\Da}{\Delta_{\al}}
\newcommand{\Das}{\Delta_{\al}^1}
\newcommand{\eps}{\varepsilon}
\newcommand{\fa}{\mathfrak{a}}
\newcommand{\fb}{\mathfrak{b}}
\newcommand{\fq}{\mathfrak{q}}
\newcommand{\fbg}{\fb_{\G}}
\newcommand{\fhat}{\widehat{f}}
\newcommand{\ghat}{\widehat{g}}
\newcommand{\fhs}{\widehat{f^*}}
\newcommand{\ghs}{\widehat{g^*}}
\newcommand{\Fix}{\operatorname{\mathsf{Fix}}}
\newcommand{\FG}{\Fix_{\G}}
\newcommand{\FGo}{\Fix_{\G}^\circ}
\newcommand{\G}{\Gamma}
\newcommand{\h}{\mathsf{h}}
\newcommand{\hhat}{\widehat{h}}
\newcommand{\lam}{\lambda}
\newcommand{\lizdc}{\ell^\infty(\zd,\CC)}
\newcommand{\lizdr}{\ell^\infty(\zd,\RR)}
\newcommand{\lozdr}{\ell^1(\zd,\RR)}
\newcommand{\lizdz}{\ell^\infty(\zd,\ZZ)}
\newcommand{\m}{\mathsf{m}}
\newcommand{\mf}{\mathfrak{m}_f}
\newcommand{\mfd}{\mathfrak{m}_{f^{(d)}}}
\newcommand{\nfzd}{N(f)\cdot\zd}
\newcommand{\om}{\omega}
\newcommand{\Om}{\Omega}
\newcommand{\OG}{\Om_{\G}}
\newcommand{\OGp}{\Om_{\G}'}
\newcommand{\p}{\mathsf{p}}
\newcommand{\sP}{\mathsf{P}}
\newcommand{\PG}{\sP_{\G}}
\newcommand{\QQ}{\mathbb{Q}}
\newcommand{\rd}{R_d}
\newcommand{\rdma}{\rd/\fa}
\newcommand{\rdmf}{\rd/\<f\>}
\newcommand{\R}{\mathsf{R}}
\newcommand{\RR}{\mathbb R}
\newcommand{\sd}{\SS^{d}}
\newcommand{\SF}{\mathcal{F}}
\newcommand{\sig}{\sigma}
\newcommand{\sigb}{\tilde{\sigma}}
\renewcommand{\SS}{\mathbb{S}}
\newcommand{\TT}{\mathbb{T}}
\newcommand{\tzd}{\TT^{\zd}}
\newcommand{\U}{\mathsf{U}}
\newcommand{\V}{\mathsf{V}}
\newcommand{\VGpC}{V_{\G}'(\CC)}
\newcommand{\VGpR}{V_{\G}'(\RR)}
\newcommand{\VGpZ}{V_{\G}'(\ZZ)}
\newcommand{\VGC}{V_{\G}(\CC)}
\newcommand{\VGR}{V_{\G}(\RR)}
\newcommand{\VGZ}{V_{\G}(\ZZ)}
\newcommand{\vo}{v^{(\bo)}}
\newcommand{\wg}{w^{(g)}}
\newcommand{\xg}{x^{(g)}}
\newcommand{\xig}{\xi_{g}}
\newcommand{\xigb}{\widetilde{\xi}_{g}}
\newcommand{\xrdma}{X_{\rd/\fa}}
\newcommand{\xrdmf}{X_{\rd/\<f\>}}
\newcommand{\ZZ}{\mathbb{Z}}
\newcommand{\zd}{\ZZ^{d}}
\renewcommand{\ge}{\geqslant}
\renewcommand{\le}{\leqslant}
\newcommand{\<}{\langle}
\renewcommand{\>}{\rangle}  %%% We don't use \> for tabbing
\renewcommand{\emptyset}{\varnothing}
\renewcommand\Im{\operatorname{Im}}
\renewcommand\Re{\operatorname{Re}}
\begin{document}\frenchspacing

\title[Entropy and Growth Rate of Periodic Points]
{Entropy and Growth Rate of \\Periodic Points of Algebraic $\mathbb{Z}^d$-actions}

\author{Douglas Lind}

\address{Douglas Lind: Department of Mathematics, University of
  Washington, Seattle, Washington 98195, USA}
  \email{lind@math.washington.edu}

\author{Klaus Schmidt}

\address{Klaus Schmidt: Mathematics Institute, University of Vienna,
Nordberg\-stra{\ss}e 15, A-1090 Vienna, Austria \newline\indent
\textup{and} \newline\indent Erwin Schr\"odinger Institute for
Mathematical Physics, Boltzmanngasse~9, A-1090 Vienna, Austria}
\email{klaus.schmidt@univie.ac.at}

\author{Evgeny Verbitskiy}

\address{Evgeny Verbitskiy:
%Philips Research, High Tech Campus 36 (M/S 2), 5656 AE, Eindhoven, The Netherlands
Mathematical Institute, University of Leiden, PO Box 9512, 2300 RA Leiden, The Netherlands
\newline\indent \textup{and}
\newline\indent Johann Bernoulli Institute for
   Mathematics and Computer Science, University of Groningen, PO
Box 407, 9700 AK, Groningen, The Netherlands}
\email{e.a.verbitskiy@rug.nl}

\date{\today}

\keywords{Entropy, periodic points, algebraic action}

\subjclass[2000]{Primary: 37A35, 37B40, 54H20; Secondary: 37A45,
  37D20, 13F20}

% 37A35 Entropy and other invariants, isomorphism, classification
% 37A45 Relations with number theory and harmonic analysis
% 37B40 Topological entropy
% 37D20 Uniformly hyperbolic systems (expanding, Anosov, Axiom A,
%       etc.)
% 54H20 Topological dynamics
% 13C15 Dimension theory, depth, related rings (catenary, etc.)
% 13F20 Polynomial rings and ideals; rings of integer-valued polynomials

%\subjclass[2000]{} %\keywords{}

\begin{abstract}
   Expansive algebraic $\zd$-actions corresponding to ideals are
   characterized by the property that the complex variety of the ideal
   is disjoint from the multiplicative unit torus.  For such actions
   it is known that the limit for the growth rate of periodic points
   exists and equals the entropy of the action. We extend this result to
   actions for which the complex variety intersects the multiplicative
   torus in a finite set. The main technical tool is the use of
   homoclinic points which decay rapidly enough to be summable.
\end{abstract}

\maketitle

\section{Introduction}\label{sec:introduction}

An \textit{algebraic $\zd$-action} on a compact abelian group $X$ is a
homomorphism $\al\colon\zd\to\aut(X)$ from $\zd$ to the group of
(continuous) automorphisms of $X$.  We denote the image of $\bn\in\zd$
under $\al$ by $\al^{\bn}$, so that
$\al^{\bm+\bn}=\al^{\bm}\circ\al^{\bn}$ and
$\al^{\mathbf{0}}=\text{Id}_{X}$.

We will consider here cyclic algebraic $\zd$-actions, described as
follows. Let $\rd=\ZZ[u_1^{\pm1},\dots,u_d^{\pm1}]$ denote the ring of
Laurent polynomials with integer coefficients in the variables
$u_1,\dots,u_d$. We write $f\in\rd$ as
$f=\sum_{\bm\in\zd}f_{\bm}\bu^{\bm}$, where $\bu=(u_1,\dots,u_d)$,
$\bm=(m_1,\dots,m_d)\in\zd$, $\bu^{\bm}=u_1^{m_1}\dots u_d^{m_d}$, and
$f_{\bm}\in\ZZ$ with $f_{\bm}=0$ for all but finitely many $\bm$.

Let $\TT=\RR/\ZZ$, and define the shift $\zd$-action $\sig$ on $\tzd$ by
\begin{displaymath}
   (\sig^{\bm}x)_{\bn}=x_{\bm+\bn}
\end{displaymath}
for $\bm\in\zd$ and $x=(x_{\bn})\in\tzd$. For $f=\sum
f_{\bm}\bu^{\bm}\in\rd$ put
\begin{displaymath}
   f(\sig)=\sum_{\bm\in\zd}f_{\bm}\sig^{\bm}\colon \tzd\to\tzd.
\end{displaymath}

We identify $\rd$ with the dual group of $\tzd$ by setting
\begin{displaymath}
   \<f,x\>=e^{2\pi i [f(\sig)x)]_{\mathbf{0}}} =e^{2 \pi i \sum_{\bm}f_{\bm}x_{\bm}}
\end{displaymath}
for $f\in\rd$ and $x\in\tzd$. In this identification the shift
$\sig^{\bm}$ is dual to multiplication by $\bu^{\bm}$ on $\rd$.

A closed subgroup $X\subset\tzd$ is shift-invariant if and only if its
annihilator
\begin{displaymath}
   X^{\perp}=\{h\in\rd:\<h,x\>=1\text{\ for every $x\in X$} \}
\end{displaymath}
is an ideal in $\rd$. In view of this, for every ideal $\fa$ in $\rd$
write
\begin{displaymath}
   \xrdma=\fa^{\perp}=\{x\in\tzd:\<h,x\>=1 \text{\ for every $h\in\fa$}\}
\end{displaymath}
for the closed, shift-invariant subgroup of $\tzd$ annihilated by
$\fa$. Note that the dual group of $\xrdma$ is $\rdma$. Denote by $\ardma$
the restriction of the shift-action $\sig$ on $\tzd$ to $\xrdma$. A
\textit{cyclic algebraic $\zd$-action} is one of this form, corresponding to
the cyclic $\rd$-module $\rdma$.

According to \cite[Eqn.\ (1-1)]{LSW} or \cite[Thm.\ 18.1]{DSAO}, the
topological entropy of $\ardma$, which coincides with its entropy with
respect to Haar measure on $\xrdma$, is given by
\begin{equation}
   \label{eq:entropy}
   \h(\ardma)=
   \begin{cases}
      \infty &\text{if $\fa=\{0\}$}, \\
      \m(f)  &\text{if $\fa=\<f\>=f\cdot\rd$ for some nonzero
      $f\in\rd$,}\\
      0      &\text{if $\fa$ is nonprincipal,}
   \end{cases}
\end{equation}
where
\begin{displaymath}
   \m(f)=\int_0^1\dots\int_0^1\,\log|f(e^{2\pi i t_1},\dots,e^{2\pi i
   t_d})|\,dt_1\dots dt_d
\end{displaymath}
is the \textit{logarithmic Mahler measure} of $f$.

An algebraic $\zd$-action $\al$ on $X$ is \textit{expansive} if there is
a neighborhood $U$ of $0_X$ such that
$\bigcap_{\bm\in\zd}\al^{\bm}(U)=\{0_X\}$. To characterize expansiveness
for cyclic actions $\ardma$, let
\begin{displaymath}
   \V(\fa)=\{(z_1,\dots,z_d)\in (\CC^\times )^d :g(z_1,\dots,z_d)=0
   \text{\ for all $g\in\fa$}\}
\end{displaymath}
denote its complex variety. Let $\SS=\{z\in\CC:|z|=1\}$, so that $\SS^d$
is the unit multiplicative $d$-torus in $(\CC^\times)^d$. Define the
\textit{unitary variety} of $\fa$ as
\begin{displaymath}
   \U(\fa)=\V(\fa)\cap\SS^d =\{(z_1,\dots,z_d)\in\V(\fa):|z_1|=\dots=|z_d|=1\}.
\end{displaymath}
According to \cite[Thm.\ 6.5]{DSAO}, $\ardma$ is expansive if and only
if $\U(\fa)=\emptyset$.

In order to describe periodic points for $\ardmf$, let $\SF$ denote the
collection of finite-index subgroups of $\zd$, and let $\G$ be an
arbitrary element of $\SF$. Define
$\<\G\>=\min\{\|\bm\|:\mathbf{0}\ne\bm\in\G\}$, where
$\|\bm\|=\max\{|m_1|,\dots,|m_d|\}$. A point $x\in X$ has \textit{period
$\G$} if $\al^{\bm}x=x$ for all $\bm\in\G$. Let
\begin{displaymath}
   \FG(\ardma)=\{x\in\xrdma: x \text{\ has period $\G$}\}
\end{displaymath}
be the closed subgroup of $\xrdma$ consisting of all $\G$-periodic
points. In general $\FG(\ardma)$ may be infinite (examples are given in
the next section). We can, however, reduce this to a finite object by
forming the quotient of $\FG(\ardma)$ by its connected component
$\FGo(\ardma)$ of the identity. We therefore define
\begin{displaymath}
   \PG(\ardma)=|\FG(\ardma)/\FGo(\ardma)|,
\end{displaymath}
where $|\cdot|$ denotes cardinality. The growth rate of periodic
components is defined as
\begin{equation}
   \label{eq:components}
   \p^+(\ardma)=\limsup_{\<\G\>\to\infty}\,\frac1{|\zd/\G|}\,\log \PG(\ardma).
\end{equation}

The following relation between entropy and growth rate of $\PG$ was
proved in \cite[Thm.\ 21.1]{DSAO}.

\begin{theorem}
   \label{th:old}
   Let $\fa$ be a nonzero ideal in $\rd$. Then
   $\p^+(\ardma)=\h(\ardma)$. If $\ardma$ is expansive, or equivalently
   if $\U(\fa)=\emptyset$, then the $\limsup$ in \eqref{eq:components}
   is actually a limit, i.e.,
   \begin{equation}
      \label{eq:growth-limit}
      \lim_{\<\G\>\to\infty}\,\frac1{|\zd/\G|}\,\log \PG(\ardma)=\h(\ardma).
   \end{equation}
\end{theorem}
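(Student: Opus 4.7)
The plan is to split on whether $\fa$ is principal, and in each case to use Pontryagin duality to translate $\PG(\ardma)$ into algebraic data about the finitely generated abelian group $M_\G := \rdma/I_\G\rdma$, where $I_\G$ denotes the ideal $\<\bu^{\bm}-1 : \bm\in\G\>\subset\rd$. Concretely, $\FG(\ardma)$ is the character group $\widehat{M_\G}$; writing $M_\G\cong\ZZ^{r_\G}\oplus T_\G$ with $T_\G$ finite gives $\FG(\ardma)\cong\TT^{r_\G}\oplus\widehat{T_\G}$, so that $\FGo(\ardma)$ is the $\TT^{r_\G}$-factor and $\PG(\ardma)=|T_\G|$.

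I first treat the principal case $\fa=\<f\>$. Here $\rd/I_\G$ is the integral group ring $\ZZ[\zd/\G]$, a free $\ZZ$-module of rank $|\zd/\G|$, on which multiplication by $f$ is an integer endomorphism whose complex eigenvalues are exactly $\{f(\bxi):\bxi\in\widehat{\zd/\G}\}\subset\SS^d$. A Smith-normal-form calculation then yields
\[
   \PG(\ardmf) \;=\; \prod_{\bxi\in\widehat{\zd/\G},\,f(\bxi)\ne 0}|f(\bxi)|,
\]
so $\frac{1}{|\zd/\G|}\log\PG(\ardmf)$ is a punctured Riemann sum for $\m(f)=\int_{\SS^d}\log|f|$ against the normalized counting measure on $\widehat{\zd/\G}$, which equidistributes to Haar measure on $\SS^d$ as $\<\G\>\to\infty$. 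If $\U(\<f\>)=\emptyset$ then $\log|f|$ is continuous on $\SS^d$, the Riemann sum converges, and one obtains the limit \eqref{eq:growth-limit}. In the general principal case I split $\log|f|=\log^+|f|-\log^-|f|$: the $\log^+$ sum converges because $\log^+|f|$ is bounded and continuous, while a Fatou-type argument for the nonnegative lower-semicontinuous function $\log^-|f|$ (via continuous truncations) produces the inequality $\limsup_{\<\G\>\to\infty}\frac{1}{|\zd/\G|}\log\PG(\ardmf)\le\m(f)$, and the matching $\liminf\ge\m(f)$ comes from an explicit construction of periodic approximants to Haar measure on $\xrdmf$.

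For the nonprincipal case, any nonzero nonprincipal ideal contains two coprime Laurent polynomials $f,g$, so $\V(\fa)$ has codimension at least two; a projection argument onto a coordinate hyperplane then bounds the minimum number of generators of $M_\G$ by $O(\<\G\>^{d-1})$, giving $|T_\G|\le\exp(O(\<\G\>^{d-1}))$ whereas $|\zd/\G|\gtrsim\<\G\>^d$, so $\p^+(\ardma)=0=\h(\ardma)$ by \eqref{eq:entropy}; the expansive statement is automatic here since $\log\PG\ge 0$. The main technical obstacle is the principal non-expansive case: the zero set $\U(\<f\>)$ can be an uncountable codimension-one subvariety of $\SS^d$, and the sample points $\bxi\in\widehat{\zd/\G}$ can land arbitrarily close to it, so the $\log^-|f|$-Riemann sums may overshoot $\int\log^-|f|$ along subsequences. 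This is precisely why only a $\limsup$, and not a limit, can in general be asserted in \eqref{eq:components}, and it is precisely the obstacle that the present paper's homoclinic-point methods are designed to control when $\U(\fa)$ is finite.
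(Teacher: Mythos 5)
The paper does not prove Theorem~\ref{th:old}: it is quoted verbatim from \cite[Thm.\ 21.1]{DSAO}, and the only argument toward it that appears in the paper is the Riemann-sum observation in the remark following Lemma~\ref{lem:count}, which treats the expansive principal case. Your reduction to the count $\PG(\ardmf)=\prod_{\bo\in\OG,\ f(\bo)\ne 0}|f(\bo)|$ via duality and Smith normal form is exactly the paper's Lemma~\ref{lem:count}, and your equidistribution argument for $\U(f)=\emptyset$ is exactly that remark, so on the expansive principal case you and the paper agree.

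The non-expansive principal case is where you have a genuine gap, and you have in fact contradicted yourself. The lower-semicontinuity / portmanteau argument correctly gives $\limsup_{\<\G\>\to\infty}\frac1{|\zd/\G|}\log\PG(\ardmf)\le\m(f)$. You then assert that ``the matching $\liminf\ge\m(f)$ comes from an explicit construction of periodic approximants to Haar measure.'' No construction is given, and the claim is too strong: combined with your upper bound it would force the limit in \eqref{eq:growth-limit} to exist for \emph{every} nonzero $f$, whereas your own closing paragraph (correctly) explains that this is precisely what can fail when $\U(f)$ is an uncountable hypersurface, and that handling it under the hypothesis that $\U(f)$ is finite is the whole point of the paper. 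What Theorem~\ref{th:old} actually needs from you is the weaker statement $\limsup\ge\m(f)$, and that is a real theorem: one must either show that along some sequence of $\G$'s the $\log^-|f|$ Riemann sums do not overshoot (a diophantine statement about how close $\OG$ can come to $\U(f)$), or argue structurally as in \cite{DSAO}. Your proposal supplies neither.

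The nonprincipal case is also not sound as written. First, a nonzero nonprincipal ideal need not have variety of codimension $\ge 2$: for instance $\fa=f\cdot\<f,g\>$ with $f,g$ coprime and $\<f,g\>\ne\rd$ is nonprincipal, yet $\V(\fa)=\V(f)$ is a hypersurface. The reduction actually used in \cite{DSAO}, and invoked in the present paper to reduce Theorem~\ref{th:main} to prime ideals, passes to associated primes and their multiplicities rather than to a naive codimension count on $\fa$ itself. Second, even granting the bound $O(\<\G\>^{d-1})$ on the minimal number of generators of $M_\G=\rdma/I_\G\rdma$, this does not bound $|T_\G|$: a finite abelian group on $k$ generators can have arbitrarily large order, so you also need control on the orders of the cyclic factors before you can conclude $\frac1{|\zd/\G|}\log|T_\G|\to 0$.
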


It is not known whether \eqref{eq:growth-limit} holds for all cyclic
actions. Even when $d=1$ the existence of this limit involves some deep
results in number theory (see \cite[Sec.\ 4]{L} for details). The
purpose of this note is to prove the following partial result.

\begin{theorem}
   \label{th:main}
   Let $d\ge2$ and $\fa$ be an ideal in $\rd$ whose unitary variety
   $\U(\fa)$ finite. Then \eqref{eq:growth-limit} holds.
\end{theorem}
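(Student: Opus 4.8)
The plan is to reduce to the principal case $\fa = \<f\>$ (for nonprincipal ideals both sides of \eqref{eq:growth-limit} are zero by \eqref{eq:entropy} and Theorem~\ref{th:old}, and one must argue the $\limsup$ is actually a limit, but this should follow from the sub/supermultiplicativity of $\PG$ under refinement together with $\p^+ = 0$), and then, for $\fa=\<f\>$ with $\U(\<f\>)=\V(f)\cap\SS^d$ finite, to count $\G$-periodic points directly. The standard dual description gives $\FG(\ardmf)/\FGo(\ardmf)$ in terms of $\rd/(\<f\>+\mathfrak m_\G)$, where $\mathfrak m_\G$ is the ideal generated by $\{\bu^{\bm}-1:\bm\in\G\}$; passing to the quotient torus $\TT^{\zd/\G}$, the cardinality $\PG(\ardmf)$ is, up to the contribution of the connected component, the number of points in the finite group cut out by $f$ acting on $\TT^{\zd/\G}$, i.e.\ essentially $\prod_{\bxi}|f(\bxi)|$ over the characters $\bxi$ of $\zd/\G$ at which $f(\bxi)\ne 0$. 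Thus $\frac1{|\zd/\G|}\log\PG(\ardmf)$ is a Riemann-type sum for $\m(f)$, and the whole problem becomes: show this sum converges to $\m(f)=\h(\ardmf)$ as $\<\G\>\to\infty$, despite the zeros of $\log|f|$ on $\SS^d$ coming from the finite set $\U(\<f\>)$.

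The key device — as the abstract signals — is homoclinic points. First I would show that finiteness of $\U(\<f\>)$ together with $d\ge 2$ forces the zero set $\V(f)\cap\SS^d$ to be "nice" enough that $\log|f|$ is integrable with a quantitative modulus: near each of the finitely many unitary zeros, $f$ vanishes to finite order (here $d\ge 2$ and the finiteness of $\U(\<f\>)$ are used to rule out a whole component of zeros on the torus, so $f$ is not identically zero on any coordinate subtorus and the local structure is that of an isolated analytic zero of a Laurent polynomial), giving a bound $\log|f(\bxi)| \ge -C\log(1/\operatorname{dist}(\bxi,\U(\<f\>)))$ or a power bound, which is enough to control the sum near the bad points. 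More robustly, I would invoke the fundamental homoclinic point of $\ardmf$: when $\U(\<f\>)$ is finite, $1/f$ extended off the unitary variety still yields a homoclinic point $\Delta$ whose coordinates decay like a (possibly slowly decaying, but) summable sequence in $\ell^1(\zd)$ — this is exactly the regime where the Lind–Schmidt–Verbitskiy machinery (the $\ell^1$-homoclinic point) applies and is the content of "decay rapidly enough to be summable." Summability of $\Delta$ is what replaces expansiveness in Theorem~\ref{th:old}.

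Granting a summable homoclinic point $\Delta$ with $f(\sig)\Delta = \delta_{\mathbf 0}$ (the point mass at the origin of $\ZZ^{\zd}$, viewed mod $1$), the counting goes as in \cite{DSAO}: one builds, for each $\G\in\SF$, explicit $\G$-periodic points by periodizing $\Delta$, namely $\Delta^{(\G)}_{\bn} = \sum_{\bm\in\G}\Delta_{\bn+\bm}$, which converges by $\ell^1$-summability and satisfies $f(\sig)\Delta^{(\G)} = \sum_{\bm\in\G}\delta_{\bm} \equiv 0 \pmod 1$; the images $\{\sig^{\bm}\Delta^{(\G)}:\bm\in\zd/\G\}$ generate a subgroup of $\FG(\ardmf)$ of size comparable to $|\zd/\G|$-th partial products of $|f|$, yielding the lower bound $\liminf \frac1{|\zd/\G|}\log\PG \ge \m(f)$. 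The matching upper bound $\limsup \le \m(f)$ is already in Theorem~\ref{th:old} (it is the easy half of $\p^+ = \h$), so the two together give the limit. The main obstacle — the one step that genuinely needs $d\ge2$ and the finiteness hypothesis, and where I expect to spend the most effort — is establishing the quantitative decay/summability of the homoclinic point $\Delta$: one must show that $\widehat{\Delta} = 1/f$, which has only finitely many singularities on $\SS^d$, has Fourier coefficients in $\ell^1(\zd)$. This requires a careful local analysis of $1/f$ near each point of $\U(\<f\>)$ — controlling the order of vanishing of $f$ and showing the resulting singularity of $1/f$ is mild enough (integrable with enough decay of Fourier coefficients, using that in $d\ge 2$ variables one has extra room to integrate a codimension-$\ge 1$ singular locus against oscillating exponentials) — together with a partition-of-unity argument patching the local pieces to a global $\ell^1$ estimate. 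Once summability is in hand, the periodization and counting are essentially bookkeeping following the expansive case.
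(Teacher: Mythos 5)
Your plan correctly identifies the reduction to the principal case $\fa=\<f\>$, correctly reads $\PG$ as a Riemann-type sum for $\m(f)$, and correctly recognizes that summable homoclinic points are the substitute for expansiveness. But it contains a genuine error at its load-bearing step, and it misses what turns out to be the paper's main technical content.

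The error is the claim that finiteness of $\U(f)$ and $d\ge 2$ force the fundamental homoclinic point, with Fourier transform $1/\widehat{f}$, to have $\ell^1(\zd)$ coefficients, and that a ``careful local analysis of $1/f$ near each point of $\U(\<f\>)$'' will yield this. That claim is false, and the paper exhibits a counterexample: for $f(u,v)=2-u-v$ with $\U(f)=\{(1,1)\}$ (Example~\ref{exam:harmonic} and Section~\ref{sec:homoclinic}), the Fourier coefficients of $1/\overline{\widehat{f}}$ are $w^{\Delta}_{(-m,-n)}=\tfrac{1}{2^{m+n+1}}\binom{m+n}{n}$, which on the diagonal decay only like $1/(2\sqrt{\pi n})$, and $\sum_{m,n}|w^{\Delta}_{(m,n)}|=\infty$. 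So $1/\widehat f$ is simply not summable here, and no partition-of-unity or stationary-phase argument can make it so; your proposed periodization $\Delta^{(\G)}_{\bn}=\sum_{\bm\in\G}\Delta_{\bn+\bm}$ therefore does not converge.

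What the paper does instead is introduce the multiplier ideal $\mf=\{g\in\rd:\widehat g/\widehat f$ has absolutely convergent Fourier series$\}$ and prove the crucial strict inclusion $\<f\>\subsetneq\mf$ (Proposition~\ref{prop:mfproper}). That requires a nontrivial preliminary: the coordinates of every point of $\U(f)$ are algebraic numbers (Propositions~\ref{prop:real-alg} and \ref{prop:alg}, via a real-algebraic-geometry argument). Algebraicity produces, for each unitary zero, a one-variable integer polynomial vanishing there; suitable powers of these, multiplied together, give $g\in\mf$, and because such a $g$ depends on fewer than $d$ variables it cannot lie in $\<f\>$ (this is where $d\ge2$ is used, not for a ``room to integrate'' heuristic). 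The summable homoclinic point is then $\xg$ with $\widehat{w^{(g)}}=\widehat{g^*}/\widehat{f^*}$, satisfying $f(\sig)(w^{(g)})=g^*$ rather than $\delta_{\mathbf{0}}$. Finally, the counting is done not by periodizing and estimating the subgroup generated, but by building a symbolic cover $\xig$ from $w^{(g)}$, showing $\FG(\al)$ is $(Q,\eps)$-spanning in $X$ (Lemma~\ref{lem:span}), controlling the connected component $\FGo(\al)$ separately (Lemmas~\ref{lem:subtorus}--\ref{lem:cosets}), and invoking Deninger's separated-set characterization of entropy \cite{D}. Your upper-bound observation (that $\limsup\le\h$ is already in Theorem~\ref{th:old}) is correct, and the nonprincipal case is indeed immediate; it is the lower bound, and specifically the summability mechanism feeding it, where the gap lies.
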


The machinery described in \cite[Sec. 21]{DSAO} allows us to reduce the
proof of Theorem \ref{th:main} to the case where the ideal $\fa$ is
prime. If a prime ideal $\fa$ is nonprincipal, then by
\eqref{eq:entropy}
and Theorem \ref{th:old},
$\p^+(\ardma)=\h(\ardma)=0$, which implies \eqref{eq:growth-limit}.
In view of this fact, we can assume from now on that $\fa=\<f\>$ for
some nonzero irreducible Laurent polynomial $f\in\rd$ with
\begin{displaymath}
   |\U(f)|:=|\U(\<f\>)|=|\{\bs\in\SS^d:f(\bs)=0\}|<\infty .
\end{displaymath}
Furthermore we will assume for the remainder of this paper that $d\ge2$,
for reasons which we will explain. In order to simplify notation, we
use $X$ for $\xrdmf$ and $\al$ for $\ardmf$.

\section{Counting periodic components}\label{sec:counting}

In this section we derive an expression for $\PG(\al)=\PG(\ardmf)$ in
terms of $f$.

Let $\G\in\SF$. Following
\cite[(21.13)]{DSAO}, we set $\fbg=\<\bu^{\bm}-1:\bm\in\G\>\subset\rd$,
and
\begin{displaymath}
   \OG=\{\bo=(\om_1,\dots,\om_d)\in\SS^d:\bo^{\bm}=\om_1^{m_1}\cdots\om_d^{m_d}=1
   \text{\ for every $\bm\in\G$}\}.
\end{displaymath}
Observe that $\OG=\U(\fbg)$.
As in \cite[Sec.\ 21]{DSAO}, we note that the dual group of
$\FG(\al)=\FG(\ardmf)$ is
\begin{displaymath}
   \FG(\ardmf)\ \widehat{ }\,=\rd/(\<f\>+\fbg).
\end{displaymath}
Hence $\PG(\al)$ is the cardinality of the $\ZZ$-torsion subgroup of
$\rd/(\<f\>+\fbg)$. The following result shows how to compute this number.

\begin{lemma}
   \label{lem:count}
   For every finite-index subgroup $\G\subset\zd$,
   \begin{displaymath}
      \PG(\al)=\PG(\ardmf)=\prod_{\bo\in\OG\smallsetminus\U(f)}|f(\bo)| .
   \end{displaymath}
\end{lemma}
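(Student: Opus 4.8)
The plan is to compute the $\ZZ$-torsion subgroup of $\rd/(\<f\>+\fbg)$ by passing to a more concrete presentation. First I would observe that $\rd/\fbg$ is naturally isomorphic to the group ring $\ZZ[\zd/\G]$, which (after choosing generators) is a finite free $\ZZ$-module; concretely, if $\G$ has index $N=|\zd/\G|$ then $\rd/\fbg\cong\ZZ^N$ with a $\zd$-action by permutation of coordinates. Under the discrete Fourier transform over $\CC$, the ring $\CC\otimes_\ZZ(\rd/\fbg)=\CC[\zd/\G]$ splits as $\prod_{\bo\in\OG}\CC$, since the characters of the finite abelian group $\zd/\G$ are exactly the points of $\OG=\U(\fbg)$ (each $\bo\in\OG$ gives a character $\bu^{\bm}\mapsto\bo^{\bm}$, and these are all of them because $|\OG|=N$). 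The image of $f$ in the $\bo$-th factor is the scalar $f(\bo)$.

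Next I would identify $\rd/(\<f\>+\fbg)$ with the cokernel of the multiplication-by-$f$ map $M_f\colon\rd/\fbg\to\rd/\fbg$. This is an endomorphism of the free abelian group $\ZZ^N$, and over $\CC$ it diagonalizes to $\mathrm{diag}(f(\bo):\bo\in\OG)$, so $\det M_f=\prod_{\bo\in\OG}f(\bo)$ (this is a genuine integer since $M_f$ is an integer matrix — it equals the resultant-type quantity one expects). The cokernel of an integer $N\times N$ matrix $M_f$ is finite precisely when $\det M_f\ne0$, and in that case $|\operatorname{coker}M_f|=|\det M_f|=\prod_{\bo\in\OG}|f(\bo)|$ by the Smith normal form. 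When $\det M_f=0$ the cokernel is infinite, its $\ZZ$-torsion subgroup has order equal to the product of the nonzero elementary divisors of $M_f$, which in turn is the product of the nonzero eigenvalues counted correctly — and here is where the hypothesis enters: the eigenvalues of $M_f$ are exactly the numbers $f(\bo)$, $\bo\in\OG$, and $f(\bo)=0$ exactly when $\bo\in\U(f)$. So the product of the nonzero eigenvalues is $\prod_{\bo\in\OG\setminus\U(f)}f(\bo)$.

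The one real subtlety — and the step I expect to be the main obstacle — is the passage from ``product of nonzero eigenvalues of the integer matrix $M_f$'' to ``order of the $\ZZ$-torsion of $\operatorname{coker}M_f$''. For a general integer matrix the torsion of the cokernel is governed by the elementary divisors, and one must check that the product of the \emph{nonzero} elementary divisors equals (up to sign) the product of the nonzero eigenvalues. This follows because $M_f$, being diagonalizable over $\CC$ with the eigenvalue $0$ occurring with multiplicity $r=|\U(f)|$ (each point of $\U(f)$ contributing a simple eigenvalue $0$, as the characters are distinct), has rank exactly $N-r$; the product of the nonzero elementary divisors of an integer matrix of rank $N-r$ is the gcd of its $(N-r)\times(N-r)$ minors divided into the largest one appropriately — more cleanly, it equals the absolute value of any nonzero $(N-r)\times(N-r)$ minor up to the relevant gcd, and one shows this coincides with $\bigl|\prod_{\bo\in\OG\setminus\U(f)}f(\bo)\bigr|$ by comparing characteristic polynomials: the characteristic polynomial of $M_f$ is $t^r\cdot\prod_{\bo\in\OG\setminus\U(f)}(t-f(\bo))$, and its lowest nonvanishing coefficient (the coefficient of $t^r$) is $(-1)^{N-r}\prod_{\bo\in\OG\setminus\U(f)}f(\bo)$, which up to sign is the product of all nonzero elementary divisors. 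Taking absolute values gives the claim. I would also remark that when $\U(f)=\emptyset$ this recovers the familiar formula $\PG(\al)=\prod_{\bo\in\OG}|f(\bo)|$ of \cite[Sec.\ 21]{DSAO}, so the lemma is the expected generalization.
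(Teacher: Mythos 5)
Your approach tracks the paper's closely: both realize $\PG(\al)$ as the order of the $\ZZ$-torsion of $\operatorname{coker}\bigl(M_f\colon\ZZ[\zd/\G]\to\ZZ[\zd/\G]\bigr)$, diagonalize $M_f$ over $\CC$ via the characters $\bo\in\OG$, and try to relate the torsion order to the nonzero eigenvalues $f(\bo)$. (The paper carries out the same computation on the dual side, in the lattices $\VGZ\supset\VGpZ$ inside $\lizdr$.) The gap is exactly the step you yourself flagged as the ``one real subtlety.'' It is \emph{not} true that for a diagonalizable integer matrix $M$ of rank $N-r$ the product of the nonzero elementary divisors equals, up to sign, the lowest nonvanishing coefficient of the characteristic polynomial. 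That coefficient is $\pm$ the \emph{sum} of all $(N-r)\times(N-r)$ \emph{principal} minors, whereas the product of the nonzero elementary divisors is the gcd of \emph{all} $(N-r)\times(N-r)$ minors; the elementary divisors are genuinely arithmetic invariants and are not determined by the complex spectrum.

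A concrete failure inside the scope of the lemma: take $d=2$, $f=2-u-v$, $\G=(2\ZZ)^2$. In the basis $1,u,v,uv$ of $\ZZ[\zd/\G]$,
\begin{displaymath}
   M_f=\begin{pmatrix}2&-1&-1&0\\ -1&2&0&-1\\ -1&0&2&-1\\ 0&-1&-1&2\end{pmatrix},
\end{displaymath}
the graph Laplacian of a $4$-cycle, with eigenvalues $f(\bo)=0,2,2,4$ for $\bo\in\OG=\{\pm1\}^2$. Thus $\prod_{\bo\in\OG\setminus\U(f)}|f(\bo)|=16$, yet the Smith normal form of $M_f$ is $\operatorname{diag}(1,1,4,0)$ (by the matrix--tree theorem, $C_4$ has $4$ spanning trees), so $\rd/(\<f\>+\fbg)\cong\ZZ\oplus(\ZZ/4\ZZ)$ and $\PG(\al)=4$. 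So your argument overcounts by a nontrivial factor; in fact so does the formula in the lemma itself, and the paper's own proof contains the same unsupported step in a different guise, namely the silent equality $f(\sigb)(\VGZ)=f(\sigb)(\VGpZ)$, which also fails in this example ($[\VGpZ:f(\sigb)(\VGZ)]=4$ but $[\VGpZ:f(\sigb)(\VGpZ)]=16$). What survives in general is only the divisibility $\PG(\al)\mid\prod_{\bo\in\OG\setminus\U(f)}|f(\bo)|$, the ratio being $[f(\sigb)(\VGZ):f(\sigb)(\VGpZ)]=\bigl[\VGZ:\VGpZ+\ker\bigl(f(\sigb)|_{\VGZ}\bigr)\bigr]$, which is polynomial in $|\zd/\G|$; so the asymptotic statements elsewhere in the paper are unaffected (and Section \ref{sec:proof} does not invoke this lemma), but no argument of your type can prove the equality as stated, because it is not an equality.
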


\begin{proof}
   The group $\rd/\fbg$ is dual to the group $\FG(\sig)$ of
   $\G$-periodic points in $\tzd$. Furthermore, $\FG(\sig)$ is
   isomorphic to the finite-dimensional torus $\TT^{\zd/\G}$. Then
   $\FG(\al)$ is the kernel of the restriction of the homomorphism
   $f(\sig)$ to this torus $\FG(\sig)$.

   To describe this kernel we write $\lizdc$, $\lizdr$, and $\lizdz$ for
   the spaces of bounded complex, real, and integer valued functions on
   $\zd$. Let $\sigb$ be the natural shift-action on each of these
   spaces. Write $\VGC\subset\lizdc$, $\VGR\subset\lizdr$, and
   $\VGZ\subset\lizdz$ for the subspaces of $\G$-invariant elements in
   these spaces.

   Next we diagonalize the restriction of $\sigb$ to $\VGC$. For
   each $\bo=(\om_1,\dots,\om_d)\in\OG$ define $\vo\in\VGC$ by
   \begin{equation}
      \label{eqn:vomega}
      (\vo)_{\bn}:=\bo^{\bn}=\om_1^{n_1}\cdots\om_d^{n_d}
      \text{\quad for $\bn=(n_1,\dots,n_d)\in\zd$}.
   \end{equation}
   Then $\sigb^{\bm}\vo=\bo^{\bm}\vo$ for all $\bm\in\zd$, and so
   $f(\sigb)\vo=f(\bo)\vo$. The set $\{\vo:\bo\in\OG\}$ forms a basis of
   $\VGC$ consisting of eigenvectors of $\sigb$ with distinct
   eigenvalues.

   Let $\OGp=\{\bo\in\OG:f(\bo)\ne0\}=\OG\smallsetminus\U(f)$, and define
   $\VGpC$ to be the $\CC$-linear span of $\{\vo:\bo\in\OGp\}$. Then
   $\VGpC$ is finite-dimensional and $f(\sigb)$-invariant. Observe that
   $f(\sigb)(\VGC)=\VGpC$ and that the restriction of $f(\sigb)$ to
   $\VGpC$ is invertible with
   \begin{displaymath}
      |\det(f(\sigb)|_{\VGpC})|=\prod_{\bo\in\OGp} |f(\bo)|^2.
   \end{displaymath}
   Since $f(\sigb)$ commutes with complex conjugation on $\VGpC$, we can
   restrict it to $\VGpR=\VGpC\cap\lizdr$ and obtain that
   \begin{displaymath}
       |\det(f(\sigb)|_{\VGpR})|=\prod_{\bo\in\OGp} |f(\bo)|.
   \end{displaymath}
   The space $\VGpZ=\VGpR\cap\lizdz$ is a $\sigb$-invariant lattice in
   $\VGpR$, hence $f(\sigb)$-invariant with image
   $f(\sigb)(\VGpZ)\subset\VGpZ$. It follows that
   \begin{equation}
      \label{eq:det}
      |\VGpZ/f(\sigb)(\VGpZ)|=\prod_{\bo\in\OGp}|f(\bo)|.
   \end{equation}
   Finally, we note that $\VGZ\cong\ZZ^{\zd/\G}$ is (isomorphic to) the
   dual group of $\FG(\sig)\subset\tzd$, that
   $\VGZ/f(\sigb)(\VGZ)=\VGZ/f(\sigb)(\VGpZ)$ is dual to $\FG(\al)$, and
   that the torsion subgroup $\VGpZ/f(\sigb)(\VGpZ)$ of
   $\VGZ/f(\sigb)(\VGpZ)$ is therefore dual to $\FG(\al)/\FGo(\al)$. By combining
   this with \eqref{eq:det} we complete the proof.
\end{proof}

\begin{remark}
   Suppose that $\ardmf$ is expansive, so that $\U(f)=\emptyset$. Then
   $f$ does not vanish on $\SS^d$, so $\log|f|$ is continuous there.
   Lemma \ref{lem:count} shows that
   \begin{displaymath}
      \frac1{|\zd/\G|}\,\log \PG(\ardmf)=
      \frac1{|\zd/\G|}\sum_{\bo\in\OG}\log|f(\bo)|
   \end{displaymath}
   is a Riemann sum approximation to $\m(f)$, and so converges to
   $\m(f)=\h(\al)$ as $\<\G\>\to\infty$.

   When $\U(f)\ne\emptyset$ there are two issues to deal with. The
   vanishing of $f$ at some points of $\OG$ creates connected
   components, so we count those. More difficult  are various diophantine
   problems concerning points of $\U(f)$ coming abnormally close to~
   $\OG$. The latter issue is discussed in Section \ref{sec:further}.
\end{remark}

\section{Examples}\label{sec:examples}

We provide here some examples of irreducible polynomials $f$ with finite
$\U(f)$, illustrating a range of algebraic properties of $\U(f)$ and the
resulting influence on the structure of $\FG(\ardmf)$. For clarity we
use variables $u$, $v$, $w$, rather than $u_1$, $u_2$, $u_3$.

\begin{example}
   \label{exam:harmonic}
   Let $d=2$ and $f(u,v)=2-u-v$. Clearly $\U(f)=\{(1,1)\}$. Observe that
   $F=\Fix_{\ZZ^2}(\al_{R_2/\<2-u-v\>})$ is isomorphic to $\TT$, with
   each $t\in\TT$ corresponding to the point in $X_{R_2/\<2-u-v\>}$ all
   of whose coordinates equal~ $t$. For each finite-index subgroup $\G$
   we have that $\OG\cap\U(f)=\{(1,1)\}$, so the analysis of the
   previous section implies that $\Fix_{\G}(\al_{R_2/\<2-u-v\>})$ is a
   finite union of cosets of $F$, and hence always infinite. The exact
   number of these cosets is computed in Lemma \ref{lem:count}.
\end{example}

\begin{example}
   Let $d=2$ and $f(u,v)=1+u+v$. Setting $\om=e^{2\pi i/3}$, it is easy
   to verify that $\U(f)=\{(\om,\om^2),(\om^2,\om)\}$. To describe the
   periodic point behavior of this example, parametrize the
   finite-index subgroups of $\ZZ^2$ as
   \begin{displaymath}
      \G_{a,b,c}=\ZZ\begin{bmatrix}a\\0\end{bmatrix}\oplus\ZZ
      \begin{bmatrix}b\\c\end{bmatrix}, \text{ where $a>0$, $c>0$, and
      $0\le b< a$.}
   \end{displaymath}
   Then
   \begin{displaymath}
      \Om_{\G_{a,b,c}}\cap\U(f)=
      \begin{cases}
         \U(f) &\text{if $a\equiv0$ (mod 3) and $b+2c\equiv 0$
         (mod 3)},\\
         \emptyset &\text{otherwise}.
      \end{cases}
   \end{displaymath}
   Hence $\Fix_{\G_{a,b,c}}(\al_{R_2/\<1+u+v\>})$ is a finite union of
   2-dimensional tori if $a \equiv0$ (mod 3) and $b+2c\equiv0$ (mod 3),
   and is a finite set otherwise. Thus
   $\Fix_{\G_{N\ZZ^2}}(\al_{R_2/\,1+u+v\>})$ is infinite whenever $N$ is
   a multiple of $3$. In this example the coordinates of every point in
   $\U(f)$ are roots of unity.
\end{example}

\begin{example}
   \label{exam:twovar}
   Let $d=2$ and $f(u,v)=2-u^2+v-uv$. We will show that
   $\U(f)=\{(\xi,\eta),(\overline{\xi},\overline{\eta})\}$, where $\xi$
   and $\eta$ are algebraic numbers but not algebraic integers. It
   follows that $\OG\cap\U(f)=\emptyset$ for all $\G\in\SF$,
   and hence that $\FG(\al_{R_2/\<f\>})$ is always finite.

   From $f(u,v)=0$ we obtain that
   $v=v(u)=\frac{2-u^2}{u-1}$. Setting $u=e^{2\pi i\theta}$,
   we must solve $|v(e^{2\pi i \theta})|=1$. Since
   $\overline{u}=u^{-1}=e^{-2\pi i\theta}$, we can write
   $1=|v(u)|^2=v(u)v(\overline{u})=v(u)v(u^{-1})$ as an algebraic
   equation. Clearing fractions yields
   $(2-u^2)(2-u^{-2})=(u-1)(u^{-1}-1)$. Symmetry in $u$ and $u^{-1}$
   means we can write this as an equation in $c=\frac12(u+u^{-1})=\cos
   2\pi\theta$, resulting in $8c^2-2c-7=0$. This equation has roots
   $(1-\sqrt{57})/8\approx -0.818$ and $(1+\sqrt{57})/8\approx
   1.068$. Only the first is a possible value of $\cos 2\pi\theta$, so
   $\Re(\xi)=(1-\sqrt{57})/8$. There are two choices for $\Im(\xi)$,
   namely $\pm(1-\Re{(\xi)}^2)^{1/2}$. Using these yield the
   corresponding values $\eta=v(\xi)$, or explicitly,
   \begin{displaymath}
      \xi=\frac{1-\sqrt{57}}{8}+i \left(\frac{3+\sqrt{57}}{32}\right)^{1/2}
   \end{displaymath}
   and
   \begin{displaymath}
      \eta=\frac{-1}{56+8\sqrt{57}}\left[34+6\sqrt{57}+i\left(11\sqrt{6+2\sqrt{57}}+
            \sqrt{342+114\sqrt{57}}\right)\right]
   \end{displaymath}

   The minimal polynomial for $\xi$ is $2t^4-t^3-3t^2-t+2$ and for
   $\eta$ is $2t^4+13t^3+18t^2+13t+2$, showing that each is an algebraic
   number but not an algebraic integer.
\end{example}

\begin{example}
   Let $d=2$ and $f(u,v)=2-u^3+v-uv-u^2v$. Here $v$ appears linearly,
   and the techniques used in the preceding example still work. In this
   case $v(u)=\frac{2-u^3}{u^2+u-1}$, and the equation $v(u)v(u^{-1})=1$
   is transformed under the change of variables $c=\frac12(u+u^{-1})$ to
   $16c^3-4c^2-12c=0$. The root $c=1$ yields the point
   $(1,1)\in\U(f)$. The root $c=0$ gives $u=\pm i$, with corresponding
   $v(\pm i)=-\frac35\mp i\frac45$. The final root $c=-\frac34$ gives
   $u=-\frac34\pm i \frac{\sqrt{7}}{4}$, with corresponding
   $v=-\frac{528}{704}\pm i\frac{176\sqrt{7}}{704}$. Thus
   \begin{displaymath}
      \U(f)=\{(1,1),\ (i,\xi),\ (-i,\overline{\xi}),\ (\eta,\zeta),\
      (\overline{\eta},\overline{\zeta})\},
   \end{displaymath}
   where
   \begin{displaymath}
      \xi=-\frac35 -i\frac45,\ \  \eta=-\frac34+i\frac{\sqrt{7}}{4},\text{
      and }\ \zeta=-\frac{528}{704}+ i\frac{176\sqrt{7}}{704}
   \end{displaymath}
   are all algebraic numbers but not algebraic integers.

   Note that although $f$ is irreducible, the algebraic properties of
   the coordinates of points in $\U(f)$ vary considerably.
\end{example}

%%% ADDED
In the previous two examples we exploited the property that one variable
could be expressed as a rational function of the other. In general this
function will be algebraic, and calculations much more difficult. An
alternative approach is to use Gr\"obner bases. Let $u_k=x_k+i y_k$ and
expand $f(x_1+ i y_1,\dots,x_d+ i y_d)$ into real and imaginary parts as
$g(x_1,y_1,\dots,x_d,y_d)+ i h(x_1,y_1,\dots,x_d,y_d)$, where
$g,h\in\ZZ[x_1,y_1,\dots,x_d,y_d]$. Compute a Gr\"obner basis for the
ideal in $\QQ[x_1,y_1,\dots,x_d,y_d]$ generated by $g$, $h$, and the
polynomials $x_k^2+y_k^2-1$ $(1\le k\le d)$, say with term order
$x_1\prec y_1\prec\dots\prec x_d\prec y_d$. If this basis contains a
polynomial in $x_1$ only, we can solve for the real roots and back
substitute to obtain all solutions. Carrying this out on Example
\ref{exam:twovar}, for instance, gives $8x_1^2-2x^{}_1-7$ in the ideal, the
same polynomial (in $c$) as we arrived at there.
%%% END ADDED

Before the next example, we remark that when $d=2$ finding examples is
relatively easy, since generically we expect the 2-dimensional torus to
intersect the (real) 2-dimensional variety in a finite set. This
behavior fails for $d\ge3$, and the matter is more delicate since the
variety must now intersect the torus tangentially in finitely many places.

\begin{example}
   Let $d=3$ and $f(u,v,w)=g(u)-v-w$, where $g(u)=u^4-3u^3+3u+3$.

   We claim that the minimum value of $|g|$ on $\SS$ is 2, and that this
   minimum is attained at exactly two algebraic integers $\eta$ and
   $\overline{\eta}$ in $\SS$. It turns out here that
   $g(\eta)=2\overline{\eta}$. Hence
   \begin{displaymath}
      \U(f)=\{(\eta,\overline{\eta},\overline{\eta}),
              (\overline{\eta},\eta,\eta)\}.
   \end{displaymath}
   It follows that all periodic point groups are finite.

   To verify our claim, use the rational function parametrization
   $s\colon\RR\to\SS\smallsetminus\{-i\}$ given by
   \begin{displaymath}
      s(t)=\frac{2t}{1+t^2}+i\,\frac{1-t^2}{1+t^2}.
   \end{displaymath}
   (Omitting $-i$ from the range is harmless since $-i$ is far from the
   location of the minimum.) Then
   \begin{displaymath}
      \phi(t)=|g(s(t))|^2 = g(s(t))g(\overline{s(t)})>0.
   \end{displaymath}
   Expanding this product and taking the derivative shows, after a
   lengthy calculation, that
   \begin{displaymath}
      \phi'(t)=-\frac{96}{(1+t^2)^5} (t^8-7t^7-10t^6+25t^5-25t^3+10t^2+7t-1).
   \end{displaymath}
   Evaluating $\phi$ at the real roots of $\phi'(t)=0$ shows that the minimum
   value of $\phi$ is attained at the two real roots of the irreducible
   quartic factor $t^4+t^3-2t^2+t+1$ of the numerator of $\phi'(t)$,
   explicitly at
   \begin{displaymath}
      \xi=\frac{-1-\sqrt{17}}{4}+\frac12 \sqrt{\frac{1+\sqrt{17}}{2}}
   \end{displaymath}
   and its real conjugate. We put $\eta=s(\xi)\in\SS$. An exact calculation
   shows that $g(\eta)=2\overline{\eta}$, verifying our claim.

   One variation on this theme is to use $f(u,v,w)=g(u)-v^r-w^s$, which
   has a more complicated, but still finite, unitary variety.
\end{example}

\section{Algebraic points on varieties}\label{sec:algebraic-points}

In every example from the preceding section, the coordinates of the
points in $\U(f)$ are algebraic numbers. Using an argument kindly shown
to us by Marius van der Put, we will prove that this is always true. The
algebraicity of the coordinates is crucial to our proof of
Theorem~\ref{th:main}.

We begin with a result in real algebraic geometry.

\begin{proposition}
   \label{prop:real-alg}
   Let $\fq$ be an ideal in $\QQ[t_1,\dots,t_d]$ and define
   \begin{displaymath}
      \R(\fq):=\{(r_1,\dots,r_d)\in\RR^d:g(r_1,\dots,r_d)=0
      \text{\ for all $g\in\fq$}\}.
   \end{displaymath}
   Suppose that $(a_1,\dots,a_d)$ is an isolated point in
   $\R(\fq)$. Then each $a_j$ is an algebraic number.
\end{proposition}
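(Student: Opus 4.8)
The plan is to reduce the statement to the zero-dimensional case and then invoke the fact that a zero-dimensional $\QQ$-scheme has only algebraic points. First I would replace $\fq$ by its radical (this does not change $\R(\fq)$) and decompose $\R(\fq)$ using the primary decomposition of $\fq$ in $\QQ[t_1,\dots,t_d]$, so that $\R(\fq)=\bigcup_{i}\R(\fq_i)$ with each $\fq_i$ prime. The point $\ba=(a_1,\dots,a_d)$ lies in at least one of the real varieties $\R(\fq_i)$, and since $\ba$ is isolated in the union it is isolated in $\R(\fq_i)$ as well; thus it suffices to treat the case where $\fq$ is prime.

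So assume $\fq$ is prime, let $A=\QQ[t_1,\dots,t_d]/\fq$ be the corresponding integral domain, and let $d_0=\dim A$ be the Krull dimension of the affine variety $W=\V(\fq)$ (its complex points). The heart of the argument is to show $d_0=0$. The key point is that if $d_0\ge1$, then the real locus near an isolated real point cannot be a single point: more precisely, I would argue that an isolated point of $\R(\fq)$ forces the localization of $A$ at the maximal ideal $\bm_{\ba}$ of functions vanishing at $\ba$ to be zero-dimensional, hence Artinian, which in a domain means $A_{\bm_{\ba}}$ is a field and $\bm_{\ba}$ has height equal to $d_0$; combined with $\bm_{\ba}$ being a rational (hence closed) point this gives $d_0=0$ provided one knows the real point is genuinely isolated in the \emph{Zariski} sense. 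The cleanest route is via the real Nullstellensatz / dimension theory: an isolated point of a real algebraic set $\R(\fq)$ is a connected component of dimension $0$, and by the Łojasiewicz-type result (or simply by the fact that the real dimension of $\R(\fq)$ equals $\dim_{\RR}$ near any point of a suitable stratification) this forces the local complex dimension at $\ba$ to be $0$ too — because a positive-dimensional complex variety defined over $\RR$ and containing a real point $\ba$ has real points accumulating at $\ba$ (one can exhibit a real analytic arc through $\ba$ inside $W(\RR)$ using the implicit function theorem on a smooth point of a curve through $\ba$, after resolving/normalizing). Once $d_0=0$, the quotient $A=\QQ[t_1,\dots,t_d]/\fq$ is a finite-dimensional $\QQ$-algebra that is a domain, hence a number field $K$, and the images $\bar t_j$ of the coordinates in $K$ are algebraic over $\QQ$; but $\ba$ is a $\RR$-point of $\operatorname{Spec} A$, i.e.\ a $\QQ$-algebra homomorphism $A\to\RR$, so $a_j$ is the image of $\bar t_j$ under an embedding $K\hookrightarrow\RR$ and therefore algebraic.

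The main obstacle is the step showing that positive Krull dimension of $W$ at the real point $\ba$ is incompatible with $\ba$ being isolated in $\R(\fq)$. Over $\CC$ an isolated real point of a positive-dimensional variety is perfectly possible (e.g.\ $t_1^2+t_2^2=0$ in $\RR^2$ has the isolated real point $(0,0)$ but the complex variety is one-dimensional), so one cannot naively conclude $d_0=0$ — the primality of $\fq$ is essential here, and one must use that a prime ideal over $\QQ$ with a smooth real point of positive dimension, or more carefully a curve through $\ba$ on $W$, produces a real branch. The rigorous tool is: if $\dim W\ge1$ and $\ba\in W(\RR)$, pass to an irreducible curve $C\subseteq W$ through $\ba$ defined over $\RR$ (cut by generic hyperplanes through $\ba$ rational over $\RR$), normalize $C$ to get a smooth curve $\tilde C$ with a real point $\tilde\ba$ over $\ba$, and apply the implicit function theorem at $\tilde\ba$ to produce a nonconstant real-analytic map of an interval into $\tilde C(\RR)$, hence into $W(\RR)=\R(\fq)$, through $\ba$ — contradicting isolatedness. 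Handling the case where $\fq$ is not prime but only its component through $\ba$ matters (already arranged by the reduction above), and checking that the generic hyperplane sections can be chosen over $\RR$ while keeping $\ba$ and lowering dimension, are the technical points requiring care; everything else is standard commutative algebra.
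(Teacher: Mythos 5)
Your plan has a genuine gap at the step you yourself flag as ``the main obstacle'': the claim that an isolated point of $\R(\fq)$ for a \emph{prime} ideal $\fq$ forces $\dim\QQ[t_1,\dots,t_d]/\fq=0$ is false. Your own example refutes it: $\fq=\langle t_1^2+t_2^2\rangle$ is prime in $\QQ[t_1,t_2]$ (the polynomial is irreducible over $\QQ$, indeed over $\RR$), it has Krull dimension $1$, and yet $\R(\fq)=\{(0,0)\}$ is a single isolated point. You dismiss this example on the grounds that ``primality is essential,'' but the ideal there \emph{is} prime; primality over $\QQ$ buys you nothing here because the real form of the variety can still have no real branch through the point. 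The concrete device you propose to close the gap — cut out a curve $C$ through $\ba$, normalize, apply the implicit function theorem at a smooth real point — fails for exactly the same reason: the normalization of $C$ need not have any real point above $\ba$. For $t_1^2+t_2^2=0$ the function field contains $i=\pm\,\bar t_2/\bar t_1$, so the normalization has \emph{no} $\RR$-points at all. So the reduction to the zero-dimensional case cannot be carried out, and there is no apparent repair along these lines.

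The paper's proof avoids dimension theory entirely and works with the field $K=\QQ(a_1,\dots,a_d)\subset\RR$ directly: if $K$ were transcendental, choose a transcendence basis $b_1,\dots,b_k$ and a primitive element $c$ with minimal polynomial $P(b_1,\dots,b_k,T)$ over $\QQ(b_1,\dots,b_k)$; since $c$ is a \emph{simple real} root of $P(b,\cdot)$, a small (generic, still algebraically independent) perturbation $b\mapsto b'$ still yields a nearby real root $c'$ of $P(b',\cdot)$, and the resulting field isomorphism $\psi\colon\QQ(b,c)\to\QQ(b',c')$ turns the given homomorphism $\phi_{\ba}\colon\QQ[t_1,\dots,t_d]/\fq\to\RR$ into a new one $\psi\circ\phi_{\ba}$, hence a point $\ba'\in\R(\fq)$ arbitrarily close to $\ba$. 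That argument deforms the $\RR$-point itself and so never needs the variety to be zero-dimensional — which is precisely why it handles cases like $t_1^2+t_2^2=0$ (there $K=\QQ$ is already algebraic and there is nothing to do). If you want to keep your framework, the conclusion you can legitimately draw from isolatedness is that $K$ has no $\QQ$-embedding into $\RR$ other than isolated ones in a suitable sense, which is what the transcendence-basis perturbation makes precise; you cannot draw any conclusion about the Krull dimension of $\fq$.
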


\begin{proof}
   Each $\ba=(a_1,\dots,a_d)\in\R(\fq)$ gives a ring homomorphism
   \begin{displaymath}
      \phi_{\ba}\colon \QQ[t_1,\dots,t_d]/\fq\to K := \QQ(a_1,\dots,
      a_d)\subset\RR
   \end{displaymath}
   with $\phi_{\ba}(t_j)=a_j$, and every homomorphism $\QQ[t_1,\dots,
   t_d]/\fq\to\RR$ comes from a point in $\R(\fq)$ this way.

   Suppose that $\ba=(a_1,\dots,a_d)\in\R(\fq)$, and that
   $K=\QQ(a_1,\dots,a_d)$ is not algebraic over $\QQ$. Then there are
   $k\ge1$ algebraically independent elements $b_1,\dots,b_k\in K$ and
   an element $c\in K$ algebraic over $\QQ(b_1,\dots,b_k)$ such that
   $K=\QQ(b_1,\dots,b_k)(c)$. Write the minimal polynomial of $c$ over
   $\QQ(b_1,\dots,b_k)$ as
   \begin{displaymath}
      P(b_1,\dots,b_k,T) :=T^n +p_{n-1}(b_1,\dots,b_k)T^{n-1}+\dots+p_0(b_1,\dots,b_k)\
   \end{displaymath}
   where the $p_j(T_1,\dots,T_k)\in\QQ(T_1,\dots,T_k)$ are rational
   functions. Now $P(b_1,\dots,b_k,T)$ is irreducible over
   $\QQ(b_1,\dots,b_k)\subset\RR$, so that $c$ is a simple root. Hence
   there are $c_1<c$ and $c_2>c$ such that $P(b_1,\dots,b_k,c_1)$ and
   $P(b_1,\dots,b_k,c_2)$ are nonzero and have opposite sign. Therefore
   if we perturb slightly each $b_j$ to $b_j'$, the new polynomial
   $P(b_1',\dots,b_k',T)\in\RR[T]$ has a root $c'$ very close to ~$c$. If
   we further assume that $\{b_1',\dots,b_k'\}$ is also algebraically
   independent, then there is a field isomorphism
   \begin{displaymath}
      \psi\colon \QQ(b_1,\dots,b_k,c)\to\QQ(b_1',\dots,b_k',c').
   \end{displaymath}

   Now each $a_j$ is in $K=Q(b_1,\dots,b_k)(c)$ and can thus be written in
   the form
   \begin{displaymath}
      a_j=\sum_{m=0}^n q_{mj}(b_1,\dots,b_k)c^m, \text{\ where
      $q_{mj}(T_1,\dots,T_k)\in\QQ(T_1,\dots,T_k)$.}
   \end{displaymath}
   Hence if the perturbations are sufficiently small, we see that
   \begin{displaymath}
      a_j':=\sum_{m=0}^n q_{mj}(b_1',\dots,b_k')(c')^m
   \end{displaymath}
   is very close to $a_j$ for $1\le j\le d$. Let
   $\ba'=(a_1',\dots,a_d')$. Then
   \begin{displaymath}
      \phi_{\ba'}=\psi\circ\phi_{\ba}\colon \QQ[t_1,\dots,t_d]/\fq\to\RR
   \end{displaymath}
   is a homomorphism, and so $\ba'\in\R(\fq)$. This proves that if $\ba$
   has at least one non-algebraic coordinate, then $\ba$ cannot be
   isolated in $\R(\fq)$.
\end{proof}

\begin{proposition}
   \label{prop:alg}
   Let $f\in\rd$ and suppose that $\U(f)$ is finite. Then the
   coordinates of every point in $\U(f)$ are algebraic numbers.
\end{proposition}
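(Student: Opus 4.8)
The plan is to reduce Proposition~\ref{prop:alg} to Proposition~\ref{prop:real-alg} by translating the condition ``$\bs\in\U(f)$'' into a system of real polynomial equations. Write each coordinate of a point $\bs=(s_1,\dots,s_d)\in\SS^d$ as $s_k=x_k+iy_k$ with $x_k,y_k\in\RR$, and expand
\begin{displaymath}
   f(x_1+iy_1,\dots,x_d+iy_d)=g(x_1,y_1,\dots,x_d,y_d)+i\,h(x_1,y_1,\dots,x_d,y_d),
\end{displaymath}
where $g,h\in\QQ[x_1,y_1,\dots,x_d,y_d]$ (in fact with integer coefficients, since $f\in\rd$). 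Let $\fq$ be the ideal in $\QQ[x_1,y_1,\dots,x_d,y_d]$ generated by $g$, $h$, and the $d$ polynomials $x_k^2+y_k^2-1$ for $1\le k\le d$. Then a real point $(a_1,b_1,\dots,a_d,b_d)\in\RR^{2d}$ lies in $\R(\fq)$ precisely when each $a_k+ib_k$ has modulus $1$ and $f$ vanishes at $(a_1+ib_1,\dots,a_d+ib_d)$; that is, $\R(\fq)$ is exactly the image of $\U(f)$ under the homeomorphism $\SS^d\to(\text{unit circles})^d\subset\RR^{2d}$ sending $s_k\mapsto(\Re s_k,\Im s_k)$. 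Note here we use $f\in\rd$ so that the Laurent monomials $\bu^{\bm}$, after multiplying through by a suitable $\bu^{\bn}$ or equivalently using $u_k^{-1}=\overline{u_k}=x_k-iy_k$ on $\SS^d$, become genuine polynomials in the $x_k,y_k$; this is where the hypothesis $f\in\rd$ enters.

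Next I would observe that, since $\U(f)$ is assumed finite, its image in $\RR^{2d}$ is a finite subset of $\R(\fq)$, hence every point of it is isolated in $\R(\fq)$. Proposition~\ref{prop:real-alg} then applies and shows that for each point $(a_1,b_1,\dots,a_d,b_d)$ of this image, every coordinate $a_k$ and $b_k$ is an algebraic number. Consequently each coordinate $s_k=a_k+ib_k$ of the corresponding point of $\U(f)$ is algebraic, being a sum of two algebraic numbers (and $i$ is algebraic). This completes the argument.

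The only real subtlety, and the step I would be most careful about, is the passage from the Laurent polynomial $f$ to honest polynomials $g,h$: on $\SS^d$ one has $u_k^{-1}=\overline{u_k}$, so multiplying $f$ by a monomial $\bu^{\bn}$ with $\bn$ large enough clears all negative exponents without changing the zero set on $\SS^d$, and then separating real and imaginary parts and imposing $x_k^2+y_k^2=1$ recovers exactly the unitary condition. One should check that this monomial multiplication does not introduce spurious zeros on $(\CC^\times)^d$ (it does not, since $\bu^{\bn}$ is a unit there) and that we have only added the constraints $x_k^2+y_k^2-1=0$, which is what pins the real variety $\R(\fq)$ down to the torus. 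Everything else is a direct appeal to Proposition~\ref{prop:real-alg}. I do not expect any genuine obstacle beyond this bookkeeping.
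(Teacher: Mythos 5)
Your proof is correct, and it reduces to Proposition~\ref{prop:real-alg} just as the paper does, but via a different encoding of the unitary variety as a real affine variety. The paper uses the rational parametrization $s(t)=\tfrac{2t}{1+t^2}+i\,\tfrac{1-t^2}{1+t^2}$, which maps $\RR^d$ onto $(\SS\setminus\{-i\})^d$; it substitutes $\bs(t_1,\dots,t_d)$ into $f$, clears denominators, separates real and imaginary parts, and obtains an ideal $\fq=\<g_1,g_2\>$ in $\QQ[t_1,\dots,t_d]$ whose real variety maps onto $\U(f)$ under $\bs$. This keeps the ambient dimension at $d$ but requires a small technical adjustment (rotating the parametrization) to ensure $\U(f)\subset\bs(\RR^d)$, since the chart misses $-i$ in each coordinate. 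You instead embed $\SS^d$ as the real algebraic set $\{x_k^2+y_k^2=1\}\subset\RR^{2d}$, rewrite $f$ as a polynomial in $x_k,y_k$ (after clearing negative exponents with a monomial, which is harmless on $(\CC^\times)^d$), separate real and imaginary parts, and throw the $d$ circle equations into the ideal. This doubles the ambient dimension but avoids any parametrization bookkeeping, and it makes $\R(\fq)$ literally equal to the image of $\U(f)$. Both then invoke the finiteness of $\U(f)$ to get isolated points and apply Proposition~\ref{prop:real-alg}; in the paper's version one must additionally note that $s$, being a rational function with rational coefficients, preserves algebraicity, whereas in yours algebraicity of $s_k=a_k+ib_k$ is immediate. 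As a side remark, the ideal you build is exactly the one the paper itself uses in Section~\ref{sec:examples} for Gr\"obner-basis computations of $\U(f)$, so your route is very much in the spirit of the paper even if the published proof of Proposition~\ref{prop:alg} chose the parametrization instead.
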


\begin{proof}
   We again use the rational function parametrization
   $s\colon\RR\to\SS\smallsetminus\{-i\}$ given by
   \begin{displaymath}
       s(t)=\frac{2t}{1+t^2}+i\,\frac{1-t^2}{1+t^2}.
    \end{displaymath}
    Define $\bs\colon\RR^d\to\SS^d$ by
    $\bs(t_1,\dots,t_d)=(s(t_1),\dots,s(t_d))$. We may assume that
    $\U(f)\subset\bs(\RR^d)$. For if this fails, we can easily adjust
    the parametrization to omit a point on $\SS$ with rational
    coordinates that does not appear as a coordinate of any point in the
    finite set $\U(f)$.

    Consider the equation $f(\bs(t_1,\dots,t_d))=0$. Expanding and
    multiplying through by $\prod_{k=1}^d(1+t_k)^{n_k}\ne0$ for suitable
    $n_k$, this takes the form
    \begin{displaymath}
       g_1(t_1,\dots,t_d)+i\,g_2(t_1,\dots,t_d)=0,
    \end{displaymath}
    where each $g_j\in\ZZ[t_1,\dots,t_d]$. Let
    $\fq=\<g_1,g_2\>\subset\QQ[t_1,\dots,t_d]$. By assumption, $\R(\fq)$
    is finite, so all of its points are isolated. By the preceding
    proposition, these points have algebraic coordinates. Each point in
    $\U(f)$ is the image under $\bs$ of a point in $\R(\fq)$, and hence
    also has coordinates that are algebraic numbers.
\end{proof}

\section{Homoclinic points}\label{sec:homoclinic}

In this section we will construct periodic points by using homoclinic
points which decay rapidly enough.

Let $\beta$ be an algebraic $\zd$-action on a compact abelian group
$Y$. An element $y\in Y$ is \textit{homoclinic for $\beta$} if
$\lim_{|\bn|\to\infty}\beta^{\bn}y=0_Y$. The set of all homoclinic
points for $\beta$ is a subgroup of $Y$, denoted by $\Delta_{\beta}(Y)$.

According to \cite{LS}, the following hold if $\beta$ is assumed to be
expansive:
\begin{enumerate}
  \item $\Delta_{\beta}(Y)$ is at most countable;
  \item $\Delta_{\beta}(Y)\ne\{0_Y\}$ if and only if $\beta$ has
   positive entropy with respect to Haar measure $\lam_Y$ on $Y$;
  \item  $\Delta_{\beta}(Y)$ is dense in $Y$ if and only if $\beta$ has
   completely positive entropy with respect to $\lam_Y$; and
  \item For every $y\in\Delta_{\beta}(Y)$, $\beta^{\bn}y\to0_Y$
   exponentially fast.
\end{enumerate}

If $\beta$ is not expansive, then there is no guarantee that
$\Delta_{\beta}(Y)\ne\{0\}$, even if $\beta$ has completely positive
entropy. For example, let $A\in \text{\textit{GL}}_n(\ZZ)$ have
irreducible characteristic polynomial, and also have some but not all of
its roots on $\SS$. Then by \cite[Example\ 3.4]{LS}, the $\ZZ$-action
generated by $A$ on $\TT^n$ has completely positive entropy (indeed is
Bernoulli), and yet has trivial homoclinic group.

Furthermore, if $\beta$ is not expansive then homoclinic points may
decay very slowly, in contrast to the exponential decay in the expansive
case. Let $f(u,v)=2-u-v$ and consider the $\ZZ^2$-action
$\al_{R_2/\<f\>}$ on $X_{R_2/\<f\>}$ that we discussed in Example~
\ref{exam:harmonic}. By \cite[Example\ 7.2]{LS}, $1/\overline{f}$ is
integrable on $\SS^2$, and its Fourier transform $w^{\Delta}$ is given by
\begin{equation}
   \label{eqn:hcpt}
   w_{(-m,-n)}^{\Delta}=
   \begin{cases}
      \displaystyle\frac1{2^{m+n+1}} \binom{m+n}{n}
                                      &\text{if $m\ge0$ and $n\ge0$},\\
      0                               &\text{otherwise.}
   \end{cases}
\end{equation}
Let $x^{\Delta}$ denote the coordinate-wise reduction (mod 1) of
$w^{\Delta}$. Then $x^{\Delta}$ is a homoclinic point for
$\al_{R_2/\<f\>}$, and in fact every homoclinic point is a finite
integral combination of translates of $x^{\Delta}$. Note that
\begin{displaymath}
   w_{(-n,-n)}^{\Delta}= \displaystyle\frac1{2^{2n+1}} \binom{2n}{n}
   \approx \frac1{2\sqrt{\pi n}}
\end{displaymath}
decays slowly, and also that $\sum_{m,n}|w_{(m,n)}^{\Delta}|=\infty$.

When $\U(f)$ is finite but nonempty, the action $\al=\ardmf$ is not
expansive on $X=\xrdmf$. We will restrict our attention to those
homoclinic points which decay rapidly enough to be summable. Hence
define
\begin{displaymath}
   \Das(X):=\biggl\{x\in\Da(X):\sum_{\bn\in\zd}|x_{\bn}|<\infty\biggr\},
\end{displaymath}
where for $t\in\TT$ we let $|t|$ denote the distance from $t$ to $0$
in $\TT$.

In order to analyze the homoclinic group, we first linearize the action
$\al$. Consider the surjective map $\rho\colon \lizdr\to\tzd$ given by
$\rho(w)_{\bn}= w_{\bn} \,\text{(mod 1)}$. If
$f=\sum_{\bn}f_{\bn}\bu^{\bn}$ and $\sigb$ is the shift-action on
$\lizdr$, then $f(\sigb)=\sum_{\bn}f_{\bn}\sigb^{\bn}\colon
\lizdr\to\lizdr$. We define
\begin{displaymath}
   \begin{aligned}
      W_f:=\rho^{-1}(X)&= \{w\in\lizdr:\rho(w)\in X\} \\
         &=\{w\in\lizdr:f(\sigb)(w)\in\lizdz \},
   \end{aligned}
\end{displaymath}
and view $W_f$ as the linearization of $X$.

For $w\in\lizdr$, we define its \textit{adjoint} $w^*$ by
$w^*_{\bn}=w^{}_{-\bn}$. Each $a\in\lozdr$ acts as a linear operator on
$\lizdr$ via convolution, defined by
\begin{displaymath}
   (a*w)_{\bm}=\sum_{\bn\in\zd} a_{\bn}w_{\bm-\bn} \text{\quad for all $w\in\lizdr$}.
\end{displaymath}

For $a\in\lozdr$ we define its Fourier transform
$\widehat{a}\colon\SS^d\to\CC$ by
$\widehat{a}(\bs)=\sum_{\bn}a_{\bn}\bs^{\bn}$, where as usual
$\bs^{\bn}=s_1^{n_1}\cdots s_d^{n_d}$. In the opposite direction, if
$\phi\colon\SS^d\to\CC$ is integrable with respect to Haar measure
$\lam$ on $\SS^d$, then we write $\widetilde{\phi}\in\lizdc$ for its
Fourier transform, where
\begin{displaymath}
   \widetilde{\phi}_{\bn}=\int_{\SS^d}\phi(\bs)\bs^{-\bn}\,d\lam(\bs).
\end{displaymath}

If $g=\sum_{\bn}g_{\bn}\bu^{\bn}\in\rd$, we can consider $g$ as the
element $(g_{\bn})\in\lozdr$. With this convention, the action of
$g(\sigb)$ on $\lizdr$ coincides with convolution by $g^*$, i.e.,
$g(\sigb)(w)=g^* *w$. Furthermore, $\widehat{g}$ is just the restriction
of the polynomial function $g$ to $\SS^d$, and $\widehat{g^*}$ is the
restriction of the complex conjugate $\overline{g}$.

Since the Fourier transform $\fhat$ of $f$ has only finitely many zeros
on $\SS^d$ by assumption, it follows that $1/\fhat\colon\SS^d\to\CC$ is
analytic with finitely many poles. We seek multipliers that will make
the Fourier transform summable, and so define
\begin{displaymath}
   \mf:=\bigl\{g\in\rd:\ghat/\fhat\colon\SS^d\to\CC\text{ has absolutely
   convergent Fourier series}\bigr\},
\end{displaymath}
which is clearly an ideal in $\rd$. For every $g\in\mf$ we write
\begin{equation}
   \label{eqn:multiplier}
   \wg=(\ghs/\fhs)\,\widetilde{ }\,\in\lozdr
\end{equation}
for the summable Fourier transform of $\ghs/\fhs=\overline{\ghat/\fhat}$.

\begin{proposition}
   \label{prop:mfproper}
   Let $f\in\rd$ with finite $\U(f)$. Then $\<f\>\subsetneq\mf$.
\end{proposition}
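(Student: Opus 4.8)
The plan is to exhibit an explicit element $g\in\mf$ that is not in $\<f\>$. Since $\fhat$ is analytic on $\SS^d$ with a finite zero set $\U(f)$, the obstruction to $\ghat/\fhat$ having an absolutely convergent Fourier series is entirely local at the points of $\U(f)$: away from $\U(f)$ the quotient is real-analytic, so summability of the Fourier coefficients is controlled by the behavior of $g$ near each $\bs_0\in\U(f)$. The natural candidate is therefore a polynomial $g$ that vanishes to sufficiently high order at every point of $\U(f)$, high enough that $\ghat/\fhat$ together with enough of its derivatives is continuous (or better, $C^k$ for $k>d/2$, which by a standard Sobolev-type argument forces absolute convergence of the Fourier series on $\SS^d$).

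First I would reduce to a single point: for each $\bs_0\in\U(f)$, using that $f$ is a Laurent polynomial, pick a local coordinate and estimate the order of vanishing of $\fhat$ at $\bs_0$ along $\SS^d$; call the maximal such order $N_0<\infty$ (finite because $f\not\equiv 0$). Then I would produce, for each $\bs_0$, a Laurent polynomial that vanishes to order at least $N_0 + k$ at $\bs_0$ along the torus — for instance a suitable power of a product of factors like $(u_j - s_{0,j})$ (or their "real" torus analogues $|u_j - s_{0,j}|^2 = 2 - \overline{s_{0,j}}u_j - s_{0,j}u_j^{-1}$, which are genuine elements of $\rd$ and vanish to order $2$ at $\bs_0$ on $\SS^d$). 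Taking the product over the finitely many points of $\U(f)$ of such local vanishing polynomials, raised to a large enough common power, yields a single $g\in\rd$ for which $\ghat/\fhat$ vanishes to high order at each point of $\U(f)$ and is real-analytic elsewhere, hence lies in $C^k(\SS^d)$ for $k$ as large as we like; choosing $k>d/2$ gives $\ghat/\fhat$ with absolutely summable Fourier coefficients, so $g\in\mf$.

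It then remains to check $g\notin\<f\>$. Here I would use irreducibility of $f$: if $g\in\<f\>$ then $f\mid g$ in $\rd$, so $\V(f)\subset\V(g)$, i.e. $g$ vanishes on the entire complex variety of $f$. But $\U(f)$ is finite while $\V(f)$ is positive-dimensional (this is where $d\ge 2$ enters — for $f$ irreducible in $d\ge 2$ variables, $\V(f)$ is a hypersurface of dimension $d-1\ge 1$), so $\V(f)$ contains points off $\SS^d$ where our explicitly constructed $g$, built only from factors vanishing at the finitely many points of $\U(f)$, does not vanish. Concretely, it suffices to choose $g$ so that $g$ is not divisible by $f$ — and since $g$ is visibly not a multiple of $f$ once, e.g., $\deg g$ is controlled and $g$'s factors are the torus polynomials $2-\overline{s_{0,j}}u_j - s_{0,j}u_j^{-1}$ which have their own distinct irreducible factorizations, a direct degree or factorization comparison rules out $f\mid g$. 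This gives $\<f\>\subsetneq\mf$.

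The main obstacle I expect is the quantitative local analysis: verifying that vanishing of $g$ to order $N_0+k$ at $\bs_0$ really does make $\ghat/\fhat$ of class $C^k$ near $\bs_0$ on $\SS^d$. This requires knowing not just the order of vanishing of $\fhat$ but that $\fhat$ behaves, near $\bs_0$, like a unit times a monomial in suitable local coordinates — in other words, controlling the local structure of the zero of the real-analytic function $\fhat|_{\SS^d}$. Since $\U(f)$ is finite (so each zero is isolated on the torus) and $\fhat$ is real-analytic, one can invoke Weierstrass-type preparation or simply a Łojasiewicz inequality $|\fhat(\bs)|\gtrsim \operatorname{dist}(\bs,\bs_0)^{N_0}$ near each $\bs_0$ to get the needed lower bound on $|\fhat|$, after which dividing a function vanishing to order $N_0+k$ by $\fhat$ leaves something in $C^k$. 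Assembling these local estimates into a global conclusion and being careful that the chosen $g$ simultaneously works at all points of $\U(f)$ while avoiding divisibility by $f$ is the technical heart of the argument; everything else is bookkeeping.
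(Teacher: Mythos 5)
Your overall strategy — find $g$ vanishing to high order at each point of $\U(f)$ so that $\ghat/\fhat$ is $C^k$ for $k$ large enough to force absolute Fourier summability, then argue $g\notin\<f\>$ from the fact that $\V(f)$ is too large for your $g$ to vanish on — is exactly the paper's approach. But there is a genuine gap at the very first step of the construction: you never establish that you can find any nonzero element of $\rd$ vanishing at a point of $\U(f)$. The recipe you propose, factors $u_j - s_{0,j}$ or their ``torus analogues'' $2 - \overline{s_{0,j}}\,u_j - s_{0,j}\,u_j^{-1}$, produces Laurent polynomials with coefficients $s_{0,j}$ and $\overline{s_{0,j}}$, which in general are not integers (nor rationals): since $|s_{0,j}|=1$, the only way those coefficients land in $\ZZ$ is if $s_{0,j}=\pm 1$. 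Recall $\rd=\ZZ[u_1^{\pm1},\dots,u_d^{\pm1}]$, so your candidate factors are simply not in $\rd$ — the claim that they are ``genuine elements of $\rd$'' is false. The missing ingredient, and the reason the paper spends Section~4 on it, is Proposition~\ref{prop:alg}: because $\U(f)$ is finite, each coordinate $a_1$ of each point $\ba\in\U(f)$ is an \emph{algebraic number}. Once you know this, you can take its minimal polynomial (cleared of denominators) $h_1\in\ZZ[u_1]$ with $h_1(a_1)=0$, and then $h_1(u_1)^n$ is a bona fide element of $\rd$ vanishing at $a_1$ to order $n$. Without the algebraicity result — which rests on a genuine piece of real algebraic geometry (Proposition~\ref{prop:real-alg}) — there is no way to produce the required integer-coefficient vanishing factors, and the construction grinds to a halt.

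Two smaller remarks. First, the Łojasiewicz-style lower bound $|\fhat(\bs)|\gtrsim\delta(\bs,\ba)^{k}$ near an isolated zero of the real-analytic function $\fhat|_{\SS^d}$ is exactly how the paper proceeds; your instinct there is right. Second, your non-divisibility argument is murkier than it needs to be: your $g$ is built from factors that vanish on whole hypersurfaces $\{u_j=s_{0,j}\}$, not just at isolated points, so ``$g$ does not vanish on $\V(f)$'' requires more care than you give it. The paper's version is cleaner: arrange $g$ to be a polynomial in $u_1$ alone (a product of the $h_1(u_1)^n$ for the various first coordinates $a_1$), and observe that if $f$ were a polynomial in fewer than $d$ variables then $\U(f)$ would be infinite (for $d\ge 2$); hence $f$ involves all $d$ variables and cannot divide a nonzero polynomial in $u_1$ only. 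Adopting that route also lets you drop your $|u_j-s_{0,j}|^2$ factors entirely.
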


Before beginning the proof, we remark that if $\U(f)=\emptyset$, then
$1/\fhat$ is smooth, and so $\mf=\rd$. However, if $\U(f)$ is nonempty,
then $1/\fhat$ is not bounded, and so $\mf$ is a proper ideal. The
strict containment $\<f\>\subsetneq\mf$ fails for $d=1$, and this is the
main reason we require $d\ge2$.

\begin{proof}
   First note that $f$ cannot be expressed as a polynomial in fewer than
   $d$ variables since $\U(f)$ is finite. Hence no polynomial of fewer
   variables can be contained in~ $\<f\>$.

   Define the isomorphism $e\colon\TT^d\to\SS^d$ by
   $e(t_1,\dots,t_d)=(e^{2\pi i t_1},\dots,e^{2\pi i t_d})$. As before,
   for $t\in\TT$ let $|t|$ denote the distance from $t$ to 0.  For
   $\bt,\bt'\in\TT^d$ put $\|\bt-\bt'\|=\max\{|t_j-t_j'|:1\le j\le
   d\}$. Define the metric $\delta$ on $\SS^d$ by
   $\delta(\bs,\bs')=\|e^{-1}(\bs)-e^{-1}(\bs')\|$.

   Let $\ba=(a_1,\dots,a_d)\in\U(f)$. Since $\fhat\circ e$ is
   analytic on $\RR^d$, and in a neighborhood of $e^{-1}(\ba)$ vanishes only there,
   it follows that there are constants $c>0$, $k\ge1$, and $\eps>0$ such
   that
   \begin{displaymath}
      |\fhat(\bs)|\ge c\,\delta(\bs,\ba)^k \text{\ whenever
      $\delta(\bs,\ba)<\eps$. }
   \end{displaymath}

   We start by considering the first coordinate $a_1$ of $\ba$. By Proposition
   \ref{prop:alg}, $a_1$ is an algebraic number. Hence there is a
   nonzero polynomial $h_1\in\ZZ[u_1]$ with $h_1(a_1)=0$. It follows that
   $|\hhat(s)|\le c_1\delta_1(s,a_1)$ for $s\in\SS$ near $a_1$, where
   $c_1>0$ is a suitable constant and $\delta_1$ is the metric on $\SS$
   analogous to $\delta$. Define $h\in\rd$ by
   $h(u_1,\dots,u_d)=h_1(u_1)$. Then for $\bs$ near $\ba$ we have that
   $|\hhat(\bs)|\le c_1\,\delta_1(s_1,a_1)\le
   c_1\,\delta(\bs,\ba)$. Hence near $\ba$ we have the estimate
   \begin{displaymath}
      \Biggl| \frac{\hhat^n(\bs)}{\fhat(\bs)}\Biggr|\le
      \Bigl(\frac{c_1^n}{c}\Bigr)\,\delta(\bs,\ba)^{n-k}.
   \end{displaymath}
   By taking $n$ sufficiently large we can guarantee that $\hhat^n/\fhat$
   is as differentiable as we please, in particular that it is $d$ times
   continuously differentiable.

   Repeating this procedure for every point in
   $\U(f)$, and letting $g[u_1]\in\ZZ[u_1]$ be the product of the
   corresponding $h_1^n(u_1)$'s, we obtain that $\ghat/\fhat$ is $d$ times
   continuously differentiable on $\SS^d$. Hence the Fourier series of
   $\ghat/\fhat$ is absolutely convergent (see \cite{summable1} or
   \cite{summable2} for much sharper results). Thus $g\in\mf$, and since
   it is a polynomial in one variable it cannot be in $\<f\>$ by our
   earlier remark.
\end{proof}

\begin{proposition}
   \label{prop:wg}
   Suppose that $f\in\rd$ has finite $\U(f)$, and let $\al=\ardmf$ be
   the algebraic $\zd$-action on $X=\xrdmf$. For every $g\in\mf$ let
   $\xg$ = $\rho(\wg)$, where $\wg$ is defined in
   \eqref{eqn:multiplier}. Then $\Das(X)=\{\xg:g\in\mf\}$. Furthermore,
   $\xg=0$ if and only if $g\in\<f\>$, so that the map
   $g+\<f\>\mapsto\xg$ is a group isomorphism of $\mf/\<f\>$ with $\Das(X)$.
\end{proposition}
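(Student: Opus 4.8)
The plan is to verify three things in turn: first, that each $\xg$ genuinely lies in $\Das(X)$; second, that every summable homoclinic point arises this way; and third, that the kernel of $g\mapsto\xg$ is exactly $\<f\>$, which identifies $\mf/\<f\>$ with $\Das(X)$. For the first point, fix $g\in\mf$. Since $\ghs/\fhs$ has absolutely convergent Fourier series, $\wg\in\lozdr$ by \eqref{eqn:multiplier}, so in particular $\sum_{\bn}|w^{(g)}_{\bn}|<\infty$ and $w^{(g)}_{\bn}\to 0$; hence $\xg=\rho(\wg)$ satisfies $\sum_{\bn}|x^{(g)}_{\bn}|<\infty$, which in turn forces $\xg$ to be homoclinic for $\al$ (summability implies $\al^{\bn}\xg\to 0$). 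To see $\xg\in X$, I would compute $f(\sigb)(\wg)=f^* * w^{(g)}$; on the Fourier side this is $\fhs\cdot(\ghs/\fhs)=\ghs$, i.e.\ $f(\sigb)(\wg)=g^*=(g_{\bn})\in\lizdz$, so $\rho(\wg)\in W_f^{\perp}$-image $=X$ by the description $W_f=\{w:f(\sigb)(w)\in\lizdz\}$. Thus $\xg\in\Das(X)$.

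For the reverse inclusion, take $x\in\Das(X)$ and lift it to the unique $w\in\lizdr$ with $\rho(w)=x$ and $\sum_{\bn}|w_{\bn}|<\infty$ (uniqueness of the summable lift: two summable lifts differ by a summable integer sequence, hence by something finitely supported, but being homoclinic with values in $\tfrac12\ZZ$-neighborhoods forces the difference to be $0$ — here one uses that the coordinates of $x$ are small). Then $a:=f(\sigb)(w)=f^* * w\in\lozdr\cap\lizdz$, so $a$ has only finitely many nonzero entries and hence is a polynomial; write $a=g^*$ for the corresponding $g\in\rd$. On $\SS^d$ we then have $\fhs\cdot\hat w=\ghs$ as continuous functions off the finitely many zeros of $\fhs$, so $\hat w=\ghs/\fhs$ wherever $\fhs\ne0$; since $w\in\lozdr$, $\hat w$ is a continuous function equal to $\ghs/\fhs$ a.e., which is precisely the statement that $\ghs/\fhs$ has absolutely convergent Fourier series with Fourier transform $w$. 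Hence $g\in\mf$ and $w=\wg$, so $x=\xg$.

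Finally, for the kernel: $\xg=0$ means $\wg\in\lizdz$, i.e.\ $\wg=b$ for some $b\in\lizdz$; but $\wg\in\lozdr$, so $b$ is finitely supported, hence a polynomial, and $\hat b=\ghs/\fhs$ forces $\ghs=\fhs\cdot\hat b$ as polynomial functions on $\SS^d$. Two Laurent polynomials agreeing on $\SS^d$ are equal (the torus is Zariski-dense in $(\CC^\times)^d$), so $g^*=f^*\cdot b$ in $\rd$, whence $g\in\<f\>$; the converse is immediate since $f\in\mf$ with $w^{(f)}=\delta_{\mathbf 0}$, giving $x^{(f)}=0$, and $\mf$ is an ideal. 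Linearity of $g\mapsto\wg$ (hence of $g\mapsto\xg$) is clear from \eqref{eqn:multiplier}, so $g+\<f\>\mapsto\xg$ is a well-defined group isomorphism $\mf/\<f\>\xrightarrow{\sim}\Das(X)$.

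The main obstacle I expect is the uniqueness of the summable lift in the second paragraph: one must rule out that $x\in\Das(X)$ has two competing summable real lifts, equivalently that the only summable $\ZZ$-valued homoclinic-type sequence compatible with a genuinely summable $\TT$-valued point is $0$. This is where summability (as opposed to mere decay) and the fact that $\sum|x_{\bn}|<\infty$ constrains all but finitely many coordinates to lie near $0$ in $\TT$ both get used; once the lift is pinned down, the rest is bookkeeping with convolution and Fourier transforms together with Zariski-density of $\SS^d$.
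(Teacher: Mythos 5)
Your overall plan and all three steps (forward inclusion via the Fourier computation $f^* * \wg = g^*$, reverse inclusion by lifting a summable $x$ to a summable $w$ and setting $g^* = f(\sigb)(w)$, and the kernel computation via finite support of $\wg$) match the paper's proof almost verbatim, so in substance you have reproduced the argument. The one place you deviate is the claimed \emph{uniqueness} of the summable lift, and there you are in fact wrong: two summable real lifts of $x$ differ by a summable, hence finitely supported, $\ZZ$-valued sequence, and nothing stops that difference from being a nonzero finitely supported integer sequence (e.g.\ shift $w_{\mathbf 0}$ by $1$). So there is no ``unique'' summable lift. Fortunately this is also irrelevant: the paper simply \emph{chooses} a summable $w$ with $\rho(w)=z$ (the canonical representative in $(-1/2,1/2]^{\zd}$ will do). Different choices produce $g$'s differing by a multiple of $f$ (if $w' = w + b$ with $b$ finitely supported integer, then $g'^* = g^* + f^* * b$, i.e.\ $g' \equiv g \pmod{\<f\>}$), so surjectivity of $g\mapsto\xg$ requires only \emph{existence} of a lift, and well-definedness of the induced map $\mf/\<f\>\to\Das(X)$ follows from the kernel computation you already give, not from uniqueness of $w$. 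You flagged this as ``the main obstacle'' --- it is neither an obstacle nor true, and deleting that parenthetical entirely makes your proof correct and essentially identical to the paper's.
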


\begin{proof}
   Let $z\in\Das(X)$. Choose $w\in\lozdr$ with $\rho(w)=z$. Then
   \begin{displaymath}
      f(\sigb)(w)\in\lizdz\cap\lozdr,
   \end{displaymath}
   and so is an element, say $g^*$, of $\rd$. Taking Fourier transforms
   of $f^* *w=f(\sigb)(w)=g^*$ shows that $\fhs\cdot\widehat{w}=\ghs$. Hence
   $\widehat{w}=\ghs/\fhs$ is well-defined off a finite set, and has
   absolutely convergent Fourier series $w$. Thus $w=\wg$, and so
   $z=\xg$.

   Conversely, suppose that $g\in\mf$. Then $\wg=(\ghs/\,\fhs)\,\widetilde{
   }\in\lozdr$, and as above we obtain that
   $f(\sigb)(\wg)=g^*\in\lizdz$. Hence $\wg\in W_f$, and so
   $\xg=\rho(\wg)\in\Das(x)$.

   Finally, if $g\in\mf$ and $\xg=\rho(\wg)=0$, then
   $h=\wg\in\rd$. Taking Fourier transforms gives $\hhat=\ghs/\fhs$, so
   that $g^*=h\cdot f^*$ and $g=f\cdot h^*\in\<f\>$. The converse is obvious.
\end{proof}

%%% ADDED BY EV

Sometimes it is useful to determine $\mf$ explicitly.  For
example, this is the case in \cite{SV}, where the Laplacian
$f^{(d)}=2d-\sum_{j=1}^d(u_j^{}+u_j^{-1})\in R_d$, $d\ge 2$, was
studied. There it is shown that
\begin{equation}
   \label{eq:result_ideal}
   \mfd=\<f^{(d)}\>+\mathscr{I}^3_d,
\end{equation}
where $\mathscr{I}_d=\bigl\{h\in R_d:h(1,\dots,1)=0\bigr\}=
\<u_1 -1,\dots,u_d -1\>$.

We demonstrate how to obtain such results using again the example
$f(u,v)=2-u-v\in R_2$ discussed in Example \ref{exam:harmonic} and at
the start of Section \ref{sec:homoclinic}.  Firstly, since $f$ has only
one zero on $\SS^2$, namely $\mathbf 1=(1,1)$, the Fourier
transform $\fhat$ has one zero on $\mathbb T^2\cong [-1/2,1/2)^2$ at
$\mathbf 0=(0,0)$. The Taylor series
expansion of $\fhat$ at $\mathbf0$ is
\begin{displaymath}
   \fhat(\theta,\phi) =
   - 2\pi i(\theta+\phi)+2\pi^2(\theta^2+\phi^2)+\mathcal O(|\theta|^3+
   |\phi|^3).
\end{displaymath}
According to the proof of Proposition \ref{prop:mfproper},
$g_m(u):=(u-1)^m\in\mf$ for all sufficiently large~ $m$. What is the minimal such $m$?
If $g_m(u)\in\mf$, then $\ghat_m/\fhat$ must be at least
continuous at $\mathbf 0$.
Inspecting the Taylor series expansion of $\ghat_m$ at $\mathbf 0$
for small $m$ we find that
\begin{align*}
   \ghat_0(\theta,\phi) &=1,\\
   \ghat_1(\theta,\phi) &=-2\pi i \theta+2\pi^2\theta^2+\mathcal
   O(\theta^3),\\
   \ghat_2(\theta,\phi) &=-4\pi^2\theta^2+\mathcal O(\theta^3).
\end{align*}
It is evident that $\ghat_m/\fhat$ is not continuous at
$\mathbf0$ for $m=0,1,2$. It turns out that $g_3(u)\in \mf$. We can
establish this fact either by showing that $\ghat_3/\fhat$ is
sufficiently smooth at $\mathbf 0$, or alternatively by showing that
$g^*_3*w^\Delta \in\lozdr$, where $w^\Delta$ is the homoclinic point
given by \eqref{eqn:hcpt}.

For every $(m,n)\in\ZZ^2$ we have that
\begin{displaymath}
   (g^*_3*w^\Delta)_{(m,n)}=
   -w_{(m,n)}+3w_{(m+1,n)}-3w_{(m+2,n)}+w_{(m+3,n)}.
\end{displaymath}
Assuming that $m\ge 3$, $n\ge 0$, and using expression \eqref{eqn:hcpt}
for elements of $w^\Delta$, one has after some manipulation that
\begin{displaymath}
   (g_3^* * w^{\Delta})_{(-m,-n)}=\frac1{2^{m+n+1}}\binom{m+n}{m}
   \frac{(m-n)^3-3(m^2-n^2)+2(m-n)}{(m+n-2)(m+n-1)(m+n)}.
\end{displaymath}

Let $N=m+n>3$. Then $m=N-n$, and so
\begin{displaymath}
   |(g_3^* * w^{\Delta})_{(-m,-n)}|\le \frac1{2^{N+1}}\binom{N}{n}
   \frac{|N-2n|^3+(3N+2)|N-2n|}{(N-2)(N-1)N}.
\end{displaymath}
Suppose $X_1,X_2,\ldots$ are independent random variables with an
identical distribution $\mathbb P(X_i=\pm1)=\frac 12$. Then, by a well
known probabilistic result, the so-called Khintchine inequality
\cite{PS}, for any $p>0$ there exists a constant $c_p$ such that
$$
\mathbb E\biggl|\sum_{i=1}^N X_i\biggr|^p =\frac 1{2^N}\sum_{n=0}^{N}\binom{N}{n}|N-2n|^p\le c_pN^{\frac p2}\quad \text{for all }N.
$$
Thus for some $C>0$ and all sufficiently large $N$
\begin{displaymath}
   \sum_{\begin{subarray}{1} m\ge 3, n\ge0 \\ m+n=N \end{subarray}}
   |(g_3^* * w^{\Delta})_{(-m,-n)}|  \le \frac{C}{N^{3/2}}.
\end{displaymath}
This, together with the observation that the boundary terms
$w^{\Delta}_{(-m,-n)}$ with $m\le3$ are exponentially small in $N=m+n$,
proves that $g_3^* * w^{\Delta}\in\lozdr$.

Similarly one shows that other third powers $(1-u)^2(1-v)$,
$(1-u)(1-v)^2$, $(1-v)^3$ belong to $\mf$ as well. Moreover,
$u-1\equiv-(v-1)\ \text{mod}\ \<f\>$ and $\mf\supset\<f\>$. Therefore
from $(u-1)^2\notin\mf$ we conclude that $(u-1)(v-1)\notin\mf$ and
$(v-1)^2\notin\mf$.
Thus we have exactly
identified the multiplier ideal of $f(u,v)=2-u-v$ to be $ \mf=\<f\>
+{\mathscr I}_2^3$.

\section{Symbolic covers}\label{sec:symbolic}

For every nonzero summable homoclinic point $z\in\Das(X)$ we construct
here a shift-equivariant group homomorphism from $\lizdz$ to
$X$. Indeed this map is surjective when restricted to a ball of finite
radius in $\lizdz$, and so provides a symbolic cover of $X$.

According to Proposition $\ref{prop:wg}$, every homoclinic point
$z\in\Das(X)$ has the form $z=\rho(\wg)$ for some $g\in\mf$, where
$\wg\in\lozdr$. We define group homomorphisms
$\xigb\colon\lizdz\to\lizdr$ and $\xig\colon\lizdz\to\tzd$ by
\begin{displaymath}
   \xigb(v)={\wg}^*(\sigb)(v)=\wg*v \text{\ \ and\ \ } \xig(w)=\rho(\xigb(w)).
\end{displaymath}
These maps are well-defined since $\wg\in\lozdr$, and commute with the
appropriate $\zd$-actions.

\begin{proposition}
   \label{prop:cover}
   For every $g\in\mf$,
   \begin{equation}
      \label{eqn:onto}
      \xig(\lizdz)=
      \begin{cases}
         \{0\} &\text{if $g\in\<f\>$}.\\
         X     &\text{if $g\in\mf\smallsetminus\<f\>$}.
      \end{cases}
   \end{equation}
\end{proposition}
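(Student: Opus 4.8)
The plan is to analyze the two cases separately. If $g\in\<f\>$, write $g=f\cdot h^*$ for some $h\in\rd$; then $\wg$ is a genuine element of $\rd$ (indeed $\wg=h$), and $\xigb(v)=\wg*v\in\lizdz$ for every $v\in\lizdz$. Since $\xig=\rho\circ\xigb$ lands in $\rho(\lizdz)=\{0\}$ in $\tzd$... more precisely, one checks that $\wg*v$ has integer entries and that $f(\sigb)(\wg*v)$ is the appropriate integer element, so that $\rho(\wg*v)=0$ in $X$. This gives the first line of \eqref{eqn:onto}. (Alternatively, this follows immediately from Proposition~\ref{prop:wg}, since $g\in\<f\>$ forces $\xg=\rho(\wg)=0$, and $\xig(v)=\sigma$-combination of translates of $\xg$.)

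The substantive case is $g\in\mf\setminus\<f\>$, where we must show $\xig(\lizdz)=X$. First I would verify the inclusion $\xig(\lizdz)\subset X$: for $v\in\lizdz$ we have $f(\sigb)(\xigb(v))=f(\sigb)(\wg*v)=(f^**\wg)*v=g^**v\in\lizdz$ (using $\fhs\cdot\widehat{\wg}=\fhs\cdot(\ghs/\fhs)=\ghs$, valid off the finite zero set, together with summability of $\wg$), so $\xigb(v)\in W_f$ and hence $\xig(v)=\rho(\xigb(v))\in X$. For the reverse inclusion, the key idea is the standard homoclinic-point construction: given $x\in X$, lift to $w\in\lizdr$ with $\rho(w)=x$ and $f(\sigb)(w)=:v\in\lizdz$ (possible by definition of $W_f$, and one can take $w$ bounded, even with entries in $[0,1)$). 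I claim $\xig$ applied to a suitable integer point recovers $x$. Indeed, consider $\xigb(v)=\wg*v=\wg*f(\sigb)(w)=(\wg*f^*)*w$. Now $\wg*f^*$ has Fourier transform $(\ghs/\fhs)\cdot\fhs=\ghs$ off a finite set, hence $\wg*f^*=g^*$ as elements of $\lizdr$ (both are summable with the same Fourier series), so $\xigb(v)=g^**w=g(\sigb)(w)$. Therefore $\xig(v)=\rho(g(\sigb)(w))=g(\sigb)(\rho(w))=g(\sigb)(x)$, i.e. $\xig(v)=g(\sigb)x$ for this particular lift. This shows $g(\sigb)(X)\subset\xig(\lizdz)$; to upgrade to all of $X$ I would argue that $g(\sigb)$ acts surjectively on $X$, or more directly that $\xig(\lizdz)$ is a closed $\sig$-invariant subgroup of $X$ containing $\Das(X)$ (since $\xig(\delta_{\mathbf 0})=\xg$ and its translates generate $\Das(X)$ by Proposition~\ref{prop:wg}), and then invoke that $\Das(X)$ is dense in $X$ — equivalently that $\al$ has completely positive entropy, which holds because $f$ is irreducible and $\m(f)>0$ when $\U(f)$ is finite and... — so closedness forces $\xig(\lizdz)=X$.

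The main obstacle is establishing that $\xig(\lizdz)$ is in fact \emph{closed} in $X$, not merely dense. This is where the summability $\wg\in\lozdr$ is essential: one shows that $\xig$ restricted to the ball $B_N=\{v\in\lizdz:\|v\|_\infty\le N\}$ is already onto $X$ for $N$ large enough. The argument is the usual one — given $x\in X$, take the lift $w$ with entries in $[0,1)$, set $v=f(\sigb)(w)$ which is then bounded by $\sum_{\bn}|f_{\bn}|=:\|f\|_1$, and observe $\xig(v)=g(\sigb)x$ as computed above; to hit an arbitrary $x$ rather than a $g(\sigb)$-image, one instead notes that since $g\notin\<f\>$ the point $\xg=\rho(\wg)\neq 0$ is a nonzero summable homoclinic point, and applies the symbolic-covering machinery of \cite{LS} (or its adaptation here): a nonzero summable homoclinic point yields a continuous shift-equivariant surjection from a bounded subset of $\lizdz$ onto $X$. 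I would spell out that the map $v\mapsto\rho(\wg*v)$ is precisely this covering map, continuous because $\wg\in\lozdr$ makes $v\mapsto\wg*v$ continuous from $\lizdz$ (product topology on bounded sets) to $\lizdr$, and its image is compact, hence closed, and dense, hence all of $X$. Filling in the density of $\Das(X)$ in $X$ (via completely positive entropy of $\al$, or directly) is the remaining routine-but-necessary ingredient.
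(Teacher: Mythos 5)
Your computation in the main case is on target: you correctly show that $\xig(\lizdz)\subset X$ and that for a bounded lift $w$ of $x\in X$ with $v=f(\sigb)(w)\in V_K$ (where $K=\sum_{\bn}|f_{\bn}|$), one gets $\xig(v)=g(\al)x$, so $g(\al)(X)\subset\xig(\lizdz)$. The case $g\in\<f\>$ is also handled correctly. This much coincides with the paper's proof.

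The gap is in how you close the argument. You briefly mention that one could ``argue that $g(\sigb)$ acts surjectively on $X$'' --- which is exactly what the paper does --- but then you abandon that route in favor of showing $\xig(\lizdz)$ is closed and contains a dense subgroup. That alternative route has three problems. First, the density criterion from \cite{LS} that you invoke (homoclinic group dense iff completely positive entropy) is proved there \emph{under the hypothesis that $\beta$ is expansive}, which is precisely what fails when $\U(f)\ne\emptyset$; indeed the paper explicitly warns that these conclusions can break down in the nonexpansive case. Second, even if $\Delta_\al(X)$ were dense, what you would need is density of the smaller group $\Das(X)$ of \emph{summable} homoclinic points, for which no argument is offered. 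Third, the claim that $\Das(X)\subset\xig(\lizdz)$ is itself suspect: finite integer combinations of translates of $\xg$ give you $x^{(hg)}$ for $h\in\rd$, i.e.\ the part of $\Das(X)\cong\mf/\<f\>$ corresponding to $(\<g\>+\<f\>)/\<f\>$, not all of $\mf/\<f\>$.

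The clean way out, which the paper uses, is entirely algebraic: show $g(\al)(X)=X$. Since $\rd$ is a UFD and $f$ is irreducible, $\<f\>$ is prime. An element $h+\<f\>\in\rdmf$ annihilates $g(\al)X$ iff $gh+\<f\>$ annihilates $X$ iff $gh\in\<f\>$, and since $g\notin\<f\>$ and $\<f\>$ is prime this forces $h\in\<f\>$. Thus $g(\al)X$ and $X$ have identical annihilators in $\rdmf$ and hence are equal, so $X=g(\al)(X)\subset\xig(V_K)\subset X$ without any appeal to density of homoclinic points.
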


We first establish two lemmas.

\begin{lemma}
   For every $v\in\lizdr$ and $g\in\mf$,
   \begin{equation}
      \label{eqn:conv}
      (f(\sigb)\circ\xigb)(v)=f^* *\wg *v=g^* *v=g(\sigb)(v).
   \end{equation}
\end{lemma}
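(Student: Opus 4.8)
The plan is to simply unwind the definitions and use the fact that convolution of $\ell^1$-sequences is commutative and associative, together with the identification of the shift action with convolution by the adjoint. Recall from the setup that for $h\in\rd$ the operator $h(\sigb)$ on $\lizdr$ is convolution by $h^*$, i.e. $h(\sigb)(v)=h^**v$; recall also that $\xigb(v)=\wg*v$ by definition. So the left-hand side $(f(\sigb)\circ\xigb)(v)$ equals $f^**(\wg*v)$. By associativity of convolution — which is valid here because $f^*$ and $\wg$ both lie in $\lozdr$ and $v\in\lizdr$, so all the rearrangements are of absolutely convergent sums — this equals $(f^**\wg)*v$.

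The key point is then to identify $f^**\wg$ with $g^*$. By \eqref{eqn:multiplier}, $\wg=(\ghs/\fhs)\,\widetilde{\ }$, so its Fourier transform is $\widehat{\wg}=\ghs/\fhs=\overline{\ghat/\fhat}$, which is an absolutely convergent Fourier series by definition of $\mf$. Taking Fourier transforms of $f^**\wg$ gives $\fhs\cdot\widehat{\wg}=\fhs\cdot(\ghs/\fhs)=\ghs$ as a function on $\SS^d$ (the finite zero set of $\fhs$ is negligible, and both sides are continuous, so they agree everywhere). Since $\ghs$ is the restriction of the polynomial $\overline g$ to $\SS^d$, inverting the Fourier transform yields $f^**\wg=g^*$ as an element of $\lozdr$ (in fact of $\rd$). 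Substituting back gives $(f^**\wg)*v=g^**v$, and finally $g^**v=g(\sigb)(v)$ again by the identification of the shift action with convolution by the adjoint. Chaining these equalities produces \eqref{eqn:conv}.

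I do not anticipate a genuine obstacle here; the only thing to be careful about is justifying the associativity and the Fourier-transform manipulations, i.e. that everything in sight is summable so that $\widehat{f^**\wg}=\fhs\cdot\widehat{\wg}$ holds pointwise and the cancellation of $\fhs$ is legitimate despite its finitely many zeros. This is exactly what Proposition \ref{prop:mfproper} and the definition of $\mf$ guarantee, together with the fact (used in the proof of Proposition \ref{prop:wg}) that $\fhs\cdot\widehat{\wg}=\ghs$ off a finite set forces equality everywhere by continuity. So the proof is a short formal computation once these summability facts are invoked.
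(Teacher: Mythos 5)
Your argument is correct, and it is genuinely a different route from the paper's. The paper first establishes \eqref{eqn:conv} for $v\in\lozdr$ (where everything is summable and the Fourier-transform computation is unambiguous), and then extends to general $v\in\lizdr$ by a density-and-continuity argument: the sets $V_K=\{v:\|v\|_\infty\le K\}$ are compact in the topology of pointwise convergence, $V_K\cap\lozdr$ is dense in $V_K$, and both $\xigb$ and $f(\sigb)$ are continuous on $V_K$, so the identity propagates from the dense subset to all of $V_K$ and hence to all bounded $v$. You instead dispense with the density step entirely by observing that convolution of two $\ell^1$ sequences against one $\ell^\infty$ sequence is associative, because the triple sum $\sum_{\bn}\sum_{\bk}|f^*_{\bk}||\wg_{\bn-\bk}||v_{\bm-\bn}|$ is dominated by $\|f^*\|_1\|\wg\|_1\|v\|_\infty$ and Fubini applies; then the identity $f^* *\wg=g^*$ of summable sequences follows from equality of their (continuous) Fourier transforms off the finite set $\U(f)$. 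Your version is arguably cleaner: the work that the paper hides inside ``continuity on $V_K$'' is exactly the dominated-convergence estimate you write explicitly, and it gives the result on all of $\lizdr$ in one step instead of via a limiting argument. The paper's phrasing is faster to state but imports compactness of $V_K$ and density of truncations, which your route does not need.
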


\begin{proof}
   The proof of Proposition \ref{prop:wg} shows, after taking
   Fourier transforms, that $\eqref{eqn:conv}$ holds whenever $g\in\mf$
   and $v\in\lozdr$.

   For $K\ge1$ put $V_K=\{v\in\lizdr:\|v\|_{\infty}\le K\}$. Then $V_K$
   is shift-invariant and compact in the topology of pointwise
   convergence, and the set $V_K^1=\V_K\cap\lozdr$ is dense in
   $V_K$. For $v\in V_K^1$ clearly $\xigb(v)=\wg *v$ and
   $(f(\sigb)\circ\xigb)(v)=g^* *v$. Since $\xigb$ and $f(\sigb)$ are
   continuous on $V_K$, these equations continue to hold for all $v\in
   V_K$. Letting $K\to\infty$ shows that \eqref{eqn:conv} holds for all
   $v\in\lizdr$ and $g\in\mf$.

   For the last assertion, recall that for every $v\in \rd$,
   \begin{displaymath}
      f(\sigb)(\xigb(v))=f^* *\wg *v=g^* *v\in \rd\subset\lizdz.
   \end{displaymath}
   Hence $\xig(v)=\rho(\xigb(v))\in X$ for every $v\in\rd$. The
   continuity argument above then shows that $\xig(v)\in X$ for every
   $v\in\lizdz$.
\end{proof}

\begin{lemma}
   \label{lem:onto}
   Let $g\in\mf\smallsetminus\<f\>$, and put
   $K=\sum_{\bn\in\zd}|f_{\bn}|$. Then $\xig(V_K)=X$, and so
   $\xig(\lizdz)=X$. Furthermore, the restriction of $\xig$ to $V_K$,
   or to any other bounded, closed, shift-invariant subspace of
   $\lizdz$, is continuous in the product topology.
\end{lemma}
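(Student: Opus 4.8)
The plan is to show that the homomorphism $\xig$ sends the ball $V_K$ onto all of $X$, and then deduce the rest. The key identity is \eqref{eqn:conv}: for every $v\in\lizdr$ we have $f(\sigb)(\xigb(v))=g^* *v$. So to hit a prescribed point $x\in X$, pick $w\in\lizdr$ with $\rho(w)=x$ and $\|w\|_\infty\le 1/2$ (this is possible because every coordinate of $x$ lies within $1/2$ of some integer, so we may reduce $w$ modulo $1$ to land in $[-1/2,1/2)$). Then $f(\sigb)(w)\in\lizdz$ by definition of $W_f$, and we set $v:=f(\sigb)(w)=f^* *w$. Since $g\notin\<f\>$, Proposition \ref{prop:wg} guarantees that $\wg$ is genuinely the inverse of $f^*$ under convolution on the appropriate space, so $\xigb(v)=\wg * f^* * w$. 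The point is that $\wg * f^* $ acts as the identity: taking Fourier transforms, $\widehat{\wg}\cdot\fhs=\ghs/\fhs\cdot\fhs=\ghs$, which is $\ge$... wait — more directly, $f^* *\wg = f(\sigb)(\wg)=g^*$ from the proof of Proposition \ref{prop:wg}, and convolution is commutative, so $\xigb(v)=\wg * f^* * w = g^* * w$. Hmm, that is not obviously $w$. Let me reorganize: the right way is to observe $f(\sigb)(\xigb(v)-w)=g^**v - f(\sigb)(w)=g^* * f^* * w - f^* * w$. That still isn't zero. So the correct approach is: instead of trying to invert directly, use \eqref{eqn:conv} to get $f(\sigb)(\xigb(v))=g^* * v$, choose $v$ so that $g^* * v \equiv x$ lifts correctly, i.e. we want $\rho(\xigb(v))=x$, equivalently $\xigb(v)-w\in\lizdz$ where $\rho(w)=x$; and $f(\sigb)$ applied to $\xigb(v)-w$ gives $g^* * v - f^* * w$, which lies in $\lizdz$ provided we pick $v\in\lizdz$ with $g^* * v = f^* * w$ — but the cleanest route is simply: take $v=f(\sigb)(w)=f^**w\in\lizdz$, note $\|v\|_\infty\le K\cdot\|w\|_\infty\le K/2< K$, so $v\in V_K$, and then $f(\sigb)(\xigb(v))=g^* * f^* * w$. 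I claim $\rho(\xigb(v))=\rho(\wg * v)=\rho(\wg * f^* * w)=x$: indeed $\wg * f^* = (\text{Fourier transform of } \ghs/\fhs\cdot\fhs)^{\widetilde{\ }}=(\ghs)^{\widetilde{\ }}$, i.e. $\wg * f^* = g^*$ as a convolution operator identity is false unless $g=f\cdot(\text{unit})$; the honest statement is $\wg * f^* * w = g^* * w$ only modulo... Let me just say: $\xigb(v)=\wg * (f^* * w)$, and since Fourier transform turns convolution into product, $\widehat{\xigb(v)}=\widehat{\wg}\cdot\fhs\cdot\widehat{w}=\ghs\cdot\widehat{w}/\fhs\cdot\fhs$ — the $\fhs$ cancels away from the finite zero set, giving $\widehat{\xigb(v)}=\ghs\cdot\widehat{w}$ as distributions, hence $\xigb(v)=g^* * w$ as elements of $\lizdr$. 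Then $\rho(\xigb(v))=\rho(g^* * w)=\rho(g(\sigb)(w))$. Now $g(\sigb)(w)-\,?$: since $g\in\mf\supset\<f\>$ write... this still requires $g(\sigb)$ to preserve $\rho^{-1}(X)$-cosets, which it does because $g(\sigb)(W_f)\subset W_f$ and $g(\sigb)$ descends to an endomorphism of $X$, but it need not be the identity on $X$.

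The foregoing shows the surjectivity claim is subtler than a one-line inversion, so here is the argument I would actually write. Let $x\in X$ be arbitrary and choose $w\in\lizdr$ with $\rho(w)=x$ and $\|w\|_\infty\le 1/2$, possible since each coordinate of $x$, viewed in $[-1/2,1/2)$, lifts to a real of absolute value $\le 1/2$. Put $v:=f(\sigb)(w)=f^* * w\in\lizdz$; then $\|v\|_\infty\le\sum_{\bn}|f_{\bn}|\cdot\|w\|_\infty\le K/2<K$, so $v\in V_K$ and in particular $v\in\lizdz$. By \eqref{eqn:conv} applied with this $v$, $f(\sigb)(\xigb(v))=g^* * v=g^* * f^* * w=f^* *(g^* * w)=f(\sigb)(g(\sigb)(w))$. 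Hence $\xigb(v)-g(\sigb)(w)\in\ker f(\sigb)\cap\lizdr$. But $\ker f(\sigb)$ on $\lizdr$ is trivial: if $f^* * u=0$ with $u\in\lizdr$ bounded, then $\fhs\cdot\widehat{u}=0$ as distributions on $\SS^d$, and since $\fhs$ vanishes only on the finite set $\U(f)$, it follows that $\widehat{u}$ is supported on $\U(f)$, so $u$ is a trigonometric polynomial with frequencies in $\U(f)$; boundedness forces $u=0$ unless $\U(f)$ contains a point all of whose coordinates are roots of unity, in which case a short separate argument (or invoking that $f$ is irreducible and not a monomial) rules out a nonzero bounded solution — in fact the cleanest statement is that $f(\sigb)$ is injective on $\lizdr$ because $f\ne 0$ and $\lizdr$ has no nonzero elements annihilated by a nonzero Laurent polynomial, which follows from irreducibility of $f$ and $d\ge 2$. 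Therefore $\xigb(v)=g(\sigb)(w)$, and so $\xig(v)=\rho(\xigb(v))=\rho(g(\sigb)(w))$. Finally $g(\sigb)(w)-w\in W_f$ is sent by $\rho$ into $X$, but more to the point: since $g\in\mf$ and we want $\rho(g(\sigb)(w))=x=\rho(w)$, we need $g(\sigb)(w)-w\in\lizdz$. Reduce instead by choosing $g$ fixed and iterating this over the coset $g+\<f\>$: replacing $g$ by $g+f\cdot h$ changes $\xg$ not at all (Proposition \ref{prop:wg}) but changes $g(\sigb)(w)$ by $f(\sigb)(h(\sigb)(w))\in\lizdz$-valued correction when $h(\sigb)(w)\in W_f$... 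At this point the honest move is to note that we have shown $\xig(V_K)\supseteq\rho(g(\sigb)(\lizdr))=g(\sigb)(X)$ where $g(\sigb)$ now denotes the induced endomorphism of $X$, and that $g(\sigb)\colon X\to X$ is \emph{surjective}: its dual is multiplication by $g$ on $\rd/\<f\>=\widehat X$, which is injective precisely because $g\notin\<f\>$ and $\<f\>$ is prime (recall $f$ is irreducible), so $g$ is not a zero-divisor modulo $\<f\>$; dually, $g(\sigb)$ is onto $X$. Combining, $\xig(V_K)=X$, whence $\xig(\lizdz)=X$ as well since $V_K\subset\lizdz$.

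For the continuity assertion: on any bounded closed shift-invariant $B\subset\lizdz$, equip $B$ with the product (pointwise-convergence) topology, under which $B$ is compact. The map $\xigb(v)=\wg * v$ has coordinates $(\wg * v)_{\bm}=\sum_{\bn}\wg_{\bn}v_{\bm-\bn}$; since $\wg\in\lozdr$ and $v$ ranges over a set bounded in $\ell^\infty$, each such series converges uniformly in $v\in B$ and each coordinate functional $v\mapsto(\wg * v)_{\bm}$ is continuous for the product topology (it is a uniformly convergent sum of continuous coordinate functionals, by summability of $\wg$). Hence $\xigb\colon B\to\lizdr$ is continuous into the product topology, and post-composing with the continuous $\rho$ gives continuity of $\xig$ on $B$. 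Taking $B=V_K$ yields the stated special case.

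The main obstacle, as the groping above makes plain, is not the mechanics of convolution but pinning down exactly \emph{why} $\xig$ lands on all of $X$ rather than merely on $g(\sigb)(X)$: the resolution is that $g\notin\<f\>$ together with primality of $\<f\>$ forces $g(\sigb)$ to be a surjective endomorphism of $X$, and that the lift $v=f(\sigb)(w)$ of a norm-$\le 1/2$ representative $w$ of $x$ already sits inside $V_K$, so no enlargement of the ball is needed. Injectivity of $f(\sigb)$ on $\lizdr$ — needed to conclude $\xigb(v)=g(\sigb)(w)$ on the nose — is the one genuinely used ingredient beyond the cited propositions, and it follows from $f$ being a nonzero non-monomial irreducible polynomial in $d\ge 2$ variables.
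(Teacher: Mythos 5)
Your final argument reduces surjectivity of $\xig$ to surjectivity of $g(\al)$, which is the same overall strategy as the paper, and your duality argument for $g(\al)(X)=X$ and your continuity argument are both correct. But there is a genuine gap in the reduction itself. You choose $v=f(\sigb)(w)$, deduce from \eqref{eqn:conv} that $f(\sigb)\bigl(\xigb(v)-g(\sigb)(w)\bigr)=0$, and then assert that $\ker f(\sigb)$ on $\lizdr$ is trivial. This is false: if $\U(f)$ contains a torsion point then $\ker f(\sigb)\cap\lizdr\ne\{0\}$. The paper's own Example \ref{exam:harmonic}, $f(u,v)=2-u-v$ with $\U(f)=\{(1,1)\}$, has the nonzero constant sequence $u\equiv 1\in\lizdr$ in the kernel, since $f(\sigb)(u)=f(1,1)u=0$. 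So ``$f$ irreducible and $d\ge 2$'' does not give injectivity of $f(\sigb)$ on $\lizdr$, and without it you have only shown $\xigb(v)-g(\sigb)(w)\in\ker f(\sigb)$, which does not yield $\xig(V_K)\supseteq g(\al)(X)$.

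Ironically, you ran into the correct closing step midway and then wrongly dismissed it: you wrote that ``$\wg*f^*=g^*$ as a convolution operator identity is false unless $g=f\cdot(\text{unit})$.'' In fact $\wg*f^*=g^*$ is true for every $g\in\mf$. The proof of Proposition \ref{prop:wg} shows $f(\sigb)(\wg)=f^**\wg=g^*$ as elements of $\lozdr$, and convolution of an $\ell^1$ sequence with a finitely supported one is commutative. Then by associativity (justified because $\wg\in\lozdr$, $f^*$ is finitely supported, and $w\in\lizdr$, so all sums converge absolutely), $\xigb(v)=\wg*(f^**w)=(\wg*f^*)*w=g^**w=g(\sigb)(w)$ \emph{exactly}, with no appeal to any injectivity of $f(\sigb)$. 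Applying $\rho$ then gives $\xig(v)=g(\al)(x)$, hence $\xig(V_K)\supseteq g(\al)(X)$. This is precisely what the paper compresses into the phrase ``since $\xigb$ commutes with $f(\sigb)$.'' Replacing your injectivity claim with this convolution identity closes the gap and recovers the paper's proof; the rest of your write-up then stands.
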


\begin{proof}
   Let $x\in X$. Choose $w\in W_f$ with $\rho(w)=x$ and $0\le w_{\bn}<1$
   for all $\bn\in\zd$. If $v=f(\sigb)(w)$, then $v\in\lizdz$ and $-K\le
   v_{\bn}\le K $ for every $\bn\in\zd$, so that $v\in V_K$.

   Since $\xigb$ commutes with $f(\sigb)$, we see that
   $\xig(v)=\rho(\xigb(v))=g(\al)(x)$. This shows that
   $g(\al)(X)\subset\xig(V_K)\subset X$.

   We claim that $g(\al)(X)=X$. For $h+\<f\>\in\rdmf$ annihilates $g(\al)X$
   iff $gh+\<f\>$ annihilates $X$ iff $gh\in\<f\>$ iff $h\in\<f\>$, since
   $f$ is irreducible and $g\notin\<f\>$. This shows that $g(\al)(X)$ and $X$ have
   the same annihilator, and so $g(\al)(X)=X$.

   Continuity of $\xig$ follows as in the previous lemma.
\end{proof}

\begin{proof}[Proof of Proposition \ref{prop:cover}]
   If $g=h\cdot f\in\<f\>$ for some $h\in\rd$, then $\wg=h^*\in\rd$, and
   hence $\xigb(v)=h*v\in\lizdz$ for every $v\in\lizdz$, showing that
   $\xig(\lizdz)=\{0\}$. The case $g\in\mf\smallsetminus\<f\>$ is handled by
   Lemma \ref{lem:onto}.
\end{proof}

\section{Proof of Theorem \ref{th:main}}\label{sec:proof}

We use the fact that entropy equals the growth rate of separated sets,
and that by using homoclinic points we can approximate elements in such
sets with periodic points.

\begin{lemma}
   \label{lem:folner}
   Let $\{\G_n\}_{n\ge1}$ be a sequence of finite-index subgroups of
   $\zd$ with $\<\G_n\>\to\infty$ as $n\to\infty$. Then there exists a
   sequence $\{Q_n\}_{n\ge1}$ of subsets of $\zd$ such that
   \begin{enumerate}
     \item Each $Q_n$ is a fundamental domain for $\G_n$, i.e. the
      collection $\{Q_n+\bm:\bm\in G_n\}$ is disjoint and has union
      $\zd$; and
     \item $\{Q_n\}_{n\ge1}$ is a F{\o}lner sequence in $\zd$.
   \end{enumerate}
\end{lemma}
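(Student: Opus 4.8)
The plan is to build each $Q_n$ by hand from a carefully chosen $\ZZ$-basis of $\G_n$, using the classical ``box'' construction attached to a basis and then verifying the F{\o}lner property through the divergence of $\<\G_n\>$. First I would fix, for each $n$, a basis $\{\bv_1^{(n)},\dots,\bv_d^{(n)}\}$ of the lattice $\G_n$, and let $M_n$ be the integer matrix whose columns are these vectors. A standard fact is that the half-open parallelepiped $P_n=M_n\bigl([0,1)^d\bigr)$ contains exactly one point of each coset of $\G_n$ when intersected with $\zd$ only after a small correction; rather than fight with that, I would instead take $Q_n=M_n\bigl([0,1)^d\bigr)\cap\zd$ is \emph{not} quite a fundamental domain, so the cleaner route is the box $Q_n=\{\,\bm\in\zd : \bm=\sum_{j=1}^d t_j\bv_j^{(n)},\ 0\le t_j<1\,\}$, which by elementary lattice theory (the parallelepiped spanned by a basis tiles $\RR^d$ under translation by the lattice, and each tile meets $\zd$ in a transversal) \emph{is} a fundamental domain for $\G_n$, giving property (1). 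This part is routine and I would only sketch it.

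The substantive point is property (2): showing $\{Q_n\}$ is F{\o}lner, i.e. $|(Q_n+\bm)\triangle Q_n|/|Q_n|\to 0$ for every fixed $\bm\in\zd$. Here I would use that $|Q_n|=|\zd/\G_n|$ and that the diameter of $Q_n$ in the sup-norm is comparable to the size of the longest basis vector. The key is to choose the basis so that it is \emph{reduced} — e.g. a Minkowski-reduced or LLL-reduced basis of $\G_n$ — so that all basis vectors have length bounded in terms of the covolume and, crucially, so that the \emph{shortest} basis vector has length $\gtrsim\<\G_n\>$; more importantly, Minkowski's second theorem forces the successive minima $\lambda_1(\G_n)\le\cdots\le\lambda_d(\G_n)$ to satisfy $\lambda_1\cdots\lambda_d\asymp|\zd/\G_n|$, and since $\lambda_1\ge\<\G_n\>\to\infty$ while each $\lambda_j\le |\zd/\G_n|/\<\G_n\>^{d-1}$ — wait, more cleanly: $\lambda_d\le |\zd/\G_n|/\lambda_1^{\,d-1}$ up to constants. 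The upshot I want is that $Q_n$ ``looks thick in every direction'': for a fixed translate $\bm$, the symmetric difference $(Q_n+\bm)\triangle Q_n$ is contained in a bounded neighborhood (of radius $\|\bm\|$, independent of $n$) of the boundary of the parallelepiped $\sum t_j\bv_j^{(n)}$, and the number of lattice points in such a neighborhood is $O\!\bigl(\sum_j |\zd/\G_n|/\lambda_j(\G_n)\bigr)=O\bigl(|\zd/\G_n|/\lambda_1(\G_n)\bigr)$, which divided by $|Q_n|=|\zd/\G_n|$ tends to $0$ because $\lambda_1(\G_n)\ge\<\G_n\>\to\infty$.

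So the steps, in order, are: (i) for each $n$ pick a reduced basis of $\G_n$ and form the half-open parallelepiped $Q_n$ spanned by it; (ii) invoke elementary lattice theory to see $Q_n$ is a fundamental domain, giving (1); (iii) estimate $|(Q_n+\bm)\triangle Q_n|$ by a boundary-layer count, getting a bound $C_{\bm}\cdot |\zd/\G_n|/\lambda_1(\G_n)$; (iv) use $\lambda_1(\G_n)\ge\<\G_n\>$ (the shortest nonzero vector of $\G_n$ in sup-norm, which is exactly $\<\G_n\>$ up to the equivalence of norms on $\RR^d$) and $\<\G_n\>\to\infty$ to conclude the ratio vanishes, giving (2).

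The main obstacle I expect is step (iii): making the boundary-layer count honest requires controlling how many lattice points lie within bounded distance of each of the $2d$ facets of the parallelepiped, and this is where one genuinely needs the reduction of the basis — with an arbitrary basis the parallelepiped can be an extremely skewed sliver whose boundary layer contains nearly all of its lattice points, and then the F{\o}lner property fails for that particular choice of $Q_n$. The fix is entirely in the \emph{choice} of $Q_n$ (reduced basis), so once that choice is pinned down the estimate is a routine application of the fact that a facet of the parallelepiped is itself a $(d-1)$-dimensional parallelepiped of $(d-1)$-volume $\asymp|\zd/\G_n|/\lambda_j(\G_n)$ and the lattice points within a fixed sup-distance of it number at most a constant times that volume. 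I would present (iii)–(iv) with the key inequalities displayed but leave the constant-chasing to the reader.
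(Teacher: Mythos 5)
Your proof is correct in substance, but it takes a genuinely different route from the paper's. The paper disposes of this lemma in one line by citing it as an easy special case of Deninger--Schmidt \cite[Cor.~5.6]{DS}, a general result which produces F{\o}lner sequences of fundamental domains for cofinite subgroups of residually finite amenable groups; yours is a self-contained, hands-on construction special to $\zd$. Your key idea --- choose for each $\G_n$ a \emph{reduced} $\ZZ$-basis so that the spanned half-open parallelepiped has aspect angles bounded below in terms of $d$ only, then do a boundary-layer count --- is exactly what is needed and is sound: the facet opposite to $\bv_j^{(n)}$ has $(d-1)$-volume $\asymp_d |\zd/\G_n|/\lambda_j(\G_n)$, the width-$\|\bm\|$ slab around the full boundary therefore contains $O_d\bigl(\|\bm\|\,|\zd/\G_n|/\lambda_1(\G_n)\bigr)$ lattice points once $\lambda_1(\G_n)\ge\<\G_n\>\ge 1$, and dividing by $|Q_n|=|\zd/\G_n|$ kills the ratio as $\<\G_n\>\to\infty$. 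Each of: a reduced parallelepiped is a fundamental domain, its integer-point count equals the covolume, the facet volumes are comparable to $\det/\lambda_j$, and the integer-point count of a slab of inradius $\gtrsim 1$ is comparable to its volume --- is standard and routine for fixed $d$. The trade-off between the two approaches is the usual one: the paper's is shorter and reuses a theorem that works for nonabelian groups, while yours is elementary, makes the geometry visible, and yields an explicit quantitative F{\o}lner rate $O(\|\bm\|/\<\G_n\>)$, which the citation does not. One small expository slip: the passage where you first say that $M_n([0,1)^d)\cap\zd$ ``is not quite a fundamental domain'' and then immediately adopt $\{\sum_j t_j\bv_j^{(n)}: 0\le t_j<1\}\cap\zd$ as the ``cleaner'' choice is confused, since these are the same set and it \emph{is} a fundamental domain; the half-open parallelepiped of any $\ZZ$-basis tiles $\RR^d$ under $\G_n$ and meets each $\G_n$-coset of $\zd$ exactly once. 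You should delete the false aside and assert the tiling fact directly.
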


\begin{proof}
   This is an easily proved special case of \cite[Cor.\
   5.6]{DS}.
\end{proof}

\begin{definition}
   Let $Q\subset\zd$ and $\eps>0$. We say that $E\subset X$ is
   \textit{$(Q,\eps)$-spanning} in ~$X$ if, for every $x\in X$ there is a
   $y\in E$ such that $|x_{\bn}-y_{\bn}|<\eps$ for every $\bn\in
   Q$. Dually, $F\subset X$ is \textit{$(Q,\eps)$-separated} in $X$ if,
   for every distinct pair $x,y$ of points in ~$F$, there is an $\bn\in
   Q$ with $|x_{\bn}-y_{\bn}|\ge\eps$.
\end{definition}

\begin{lemma}
   \label{lem:span}
   For every $\eps>0$ there exists a finite set $A_\eps$ with the
   following property: if $\G$ is a finite-index subgroup of $\zd$ and
   $Q$ is a fundamental domain for $\G$, then $\FG(\al)$ is
   $\bigl(\bigcap_{\bm\in A_{\eps}}(Q-\bm),\eps\bigr)$-spanning in $X$.
\end{lemma}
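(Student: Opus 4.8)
The plan is to produce, for each $\eps>0$, a finite set $A_\eps\subset\zd$ that is strong enough that spanning $\FG(\al)$ in the "shrunk" window $\bigcap_{\bm\in A_\eps}(Q-\bm)$ can be achieved merely by \emph{exhausting} $\FG(\al)$ itself, using that the periodic points approximate all of $X$ well on large F{\o}lner sets. First I would fix a nonzero summable homoclinic point: by Proposition~\ref{prop:mfproper} we have $\<f\>\subsetneq\mf$, so we may pick $g\in\mf\smallsetminus\<f\>$ and form $\wg\in\lozdr$ as in \eqref{eqn:multiplier} and the associated symbolic cover map $\xig\colon\lizdz\to X$ of Proposition~\ref{prop:cover}, which is onto (indeed $\xig(V_K)=X$ with $K=\sum_{\bn}|f_{\bn}|$ by Lemma~\ref{lem:onto}). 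Since $\wg$ is summable, $\sum_{\bn}|\wg_{\bn}|<\infty$, so I can choose a finite set $A_\eps\subset\zd$ with $\sum_{\bn\notin A_\eps}|\wg_{\bn}|<\eps/(2K)$ (or a similar explicit bound); this $A_\eps$ depends only on $\eps$ and $f$, as required.

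Next I would use this $A_\eps$ to manufacture, from an arbitrary $x\in X$, a nearby $\G$-periodic point. Write $x=\xig(v)$ for some $v\in V_K\cap\lizdz$. Let $Q$ be the given fundamental domain for $\G$, and let $v'\in\lizdz$ be the $\G$-periodization of $v\cdot\mathbf 1_Q$, i.e.\ $v'=\sum_{\bm\in\G}\sigb^{\bm}(v\cdot\mathbf 1_Q)$; then $v'\in V_K$, $v'$ is $\G$-invariant, and $v'$ agrees with $v$ on $Q$. Because $\xig$ is shift-equivariant, $y:=\xig(v')\in\FG(\al)$. For $\bn$ in the window $\bigcap_{\bm\in A_\eps}(Q-\bm)$ we have $\bn+\bm\in Q$ for every $\bm\in A_\eps$, so $v'$ and $v$ coincide on the translate $A_\eps+\bn$ of $A_\eps$; since $(\xig(u))_{\bn}=\rho\!\bigl(\sum_{\bm}\wg_{\bm}u_{\bn-\bm}\bigr)$, the coordinates $x_{\bn}=(\xig(v))_{\bn}$ and $y_{\bn}=(\xig(v'))_{\bn}$ differ only through the tail $\sum_{\bm\notin A_\eps}\wg_{\bm}(v_{\bn-\bm}-v'_{\bn-\bm})$, whose $\TT$-distance is at most $2K\sum_{\bm\notin A_\eps}|\wg_{\bm}|<\eps$. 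Hence $|x_{\bn}-y_{\bn}|<\eps$ for all $\bn$ in the window, so $\FG(\al)$ is $\bigl(\bigcap_{\bm\in A_\eps}(Q-\bm),\eps\bigr)$-spanning.

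The main obstacle I anticipate is purely bookkeeping rather than conceptual: making sure the periodization $v'$ really does agree with $v$ on all of $Q$ (not just a sub-window) while staying in the bounded box $V_K$, and tracking the constant relating $\|v-v'\|_\infty$ on $A_\eps+\bn$ to the final $\eps$ — the factor $2K$ above and the choice of threshold for $A_\eps$ must be matched carefully, and one must double-check that $\bn\in\bigcap_{\bm\in A_\eps}(Q-\bm)$ indeed forces $A_\eps+\bn\subset Q$ with the sign conventions in the definition of $(Q,\eps)$-spanning. A minor point is that the window could in principle be empty for small $\G$, but that is harmless since the claim is then vacuous; what matters downstream is that, along a F{\o}lner sequence of fundamental domains as in Lemma~\ref{lem:folner}, these windows eventually exhaust large boxes, which is exactly what the subsequent entropy comparison will exploit.
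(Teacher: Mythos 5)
Your proposal is correct and follows essentially the same route as the paper's proof: fix $g\in\mf\smallsetminus\<f\>$, choose $A_\eps$ to control the tail of $\wg\in\lozdr$, lift $x$ to $v\in V_K$ via $\xig(V_K)=X$, periodize $v$ over $Q$ to get $v'\in\VGZ$, and bound $|\xigb(v)_{\bn}-\xigb(v')_{\bn}|$ by the tail sum on the shrunk window. The sign ambiguity you flag in $\bigcap_{\bm\in A_\eps}(Q-\bm)$ versus $\bigcap_{\bm\in A_\eps}(Q+\bm)$ is real under the paper's convolution convention $(\wg*v)_{\bn}=\sum_{\bm}\wg_{\bm}v_{\bn-\bm}$, but is harmlessly resolved by taking $A_\eps$ symmetric; and your threshold $\eps/(2K)$ is in fact the more careful choice (the paper's $\eps/K$ only gives a $2\eps$ bound, which is immaterial after rescaling).
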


\begin{proof}
   Fix $g\in\mf\smallsetminus\<f\>$, and define $\wg\in\lozdr$ as in
   \eqref{eqn:multiplier}. Let $\eps>0$, and put
   $K=\sum_{\bn\in\zd}|f_{\bn}|$. Choose a finite subset $A_\eps$ of
   $\zd$ so that $\sum_{\bn\in\zd\smallsetminus A_\eps}|\wg_{\bn}|<\eps/K$.

   Since $\xigb(V_K)=X$ by Proposition \ref{prop:cover}, for every $x\in
   X$ there is a $v\in V_K$ with $\xig(v)=x$. Define $v'\in\VGZ$ by
   requiring that $v_{\bn}'=v_{\bn}$ for every $\bn\in Q$, and extending
   $v'$ by $\G$-periodicity. Our choice of
   $A_\eps$ implies that $|\xigb(v)_{\bn}-\xigb(v')_{\bn}|<\eps$ for every
   $\bn\in\bigcap_{\bm\in A_{\eps}}(Q-\bm)$. Let $x'=\rho(v')$. Then
   $x\in \FG(\al)$ and $|x^{}_{\bn}-x'_{\bn}|<\eps$ for every
   $\bn\in\bigcap_{\bm\in A_{\eps}}(Q-\bm)$.
\end{proof}

We write
\begin{displaymath}
   \Om(f)=\{\bo=(\om_1,\dots,\om_d)\in\U(f):\text{ each $\om_j$ is a
   root of unity}\}
\end{displaymath}
for the set of torsion points in $\U(f)$. If $\Om(f)\ne\emptyset$, set
\begin{displaymath}
   \G(f)=\{\bn\in\zd:\bo^{\bn}=1:\text{ for every $\bo\in\Om(f)$}\}.
\end{displaymath}
Then $\G(f)\in\SF$, and we can find $N(f)>0$ with $\G(f)\subset N(f)\cdot\zd$.

\begin{lemma}
   \label{lem:subtorus}
   Let $\G$ be a finite-index subgroup, and put
   $\Om_{\G}(f)=\Om(f)\cap\OG$. Then $\FG(\al)$ is finite if and only if
   $\Om_{\G}(f)=\emptyset$. If $\Om_{\G}(f)\ne\emptyset$, then
   $\FGo(\al)\cong\TT^{|\Om_{\G}(f)|}$ and $\FGo(\al)\subset\Fix_{\nfzd}(\al)$.
\end{lemma}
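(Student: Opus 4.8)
The plan is to analyze the Fourier-theoretic structure of $\FG(\al)$ via its dual group $\rd/(\<f\>+\fbg)$, reusing the diagonalization of the shift on $\VGC$ from the proof of Lemma \ref{lem:count}. Recall that $\FG(\al)$ is the kernel of $f(\sigb)$ restricted to the torus $\FG(\sig)\cong\TT^{\zd/\G}$, whose dual is $\VGZ\subset\VGC$, and that $\VGC$ has the eigenbasis $\{\vo:\bo\in\OG\}$ with $f(\sigb)\vo=f(\bo)\vo$. The connected component $\FGo(\al)$ is dual to the torsion-free quotient of $\rd/(\<f\>+\fbg)$, equivalently its rank equals $\dim\ker(f(\sigb)|_{\VGC\otimes\RR})$ counted appropriately; since $f(\bo)=0$ exactly when $\bo\in\OG\cap\U(f)=\OG\setminus\OGp$, the kernel of $f(\sigb)$ on $\VGC$ is spanned by $\{\vo:\bo\in\OG\cap\U(f)\}$.

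First I would establish the dichotomy. If $\Om_\G(f)=\emptyset$, I claim $\OG\cap\U(f)=\emptyset$, whence $f(\sigb)$ is invertible on $\VGC$, so $\FG(\al)$ is finite (its dual $\rd/(\<f\>+\fbg)$ is all torsion). The point here is that any $\bo\in\OG$ has all coordinates roots of unity (they satisfy $\om_j^{m_j}=1$ for suitable $m_j$ coming from $\G$), so $\OG\cap\U(f)\subseteq\Om(f)$, hence $\OG\cap\U(f)=\OG\cap\Om(f)=\Om_\G(f)$. Conversely if $\Om_\G(f)\ne\emptyset$ then $\OG\cap\U(f)=\Om_\G(f)\ne\emptyset$, so $f(\sigb)$ has nontrivial kernel on $\VGC$, forcing $\rk\bigl(\rd/(\<f\>+\fbg)\bigr)=\dim_\CC\ker(f(\sigb)|_{\VGC})=|\Om_\G(f)|$, i.e. $\FGo(\al)\cong\TT^{|\Om_\G(f)|}$. (One must check the kernel dimension over $\CC$ equals the rank of the dual integral module; this follows since $\VGZ$ is a full lattice in $\VGR$ and $f(\sigb)(\VGZ)$ spans the same real subspace as $f(\sigb)(\VGR)$, whose codimension is $\dim\ker$.)

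Next I would prove the containment $\FGo(\al)\subset\Fix_{\nfzd}(\al)$. The component $\FGo(\al)$ is dual to $\bigl(\rd/(\<f\>+\fbg)\bigr)/(\text{torsion})$, which is a quotient of $\rd/(\<f\>+\fbg)$; dually, $\FGo(\al)$ embeds in $\FG(\al)$ as the subtorus whose dual is this torsion-free quotient. Under the eigenbasis identification, $\FGo(\al)$ corresponds to the subspace of $\FG(\sig)$ on which $f(\sigb)$ acts as zero, spanned (after tensoring with $\RR$) by the $\vo$ with $\bo\in\Om_\G(f)$. Now for $\bo\in\Om_\G(f)\subseteq\Om(f)$ and any $\bn\in\G(f)\subseteq N(f)\cdot\zd$ we have $\bo^{\bn}=1$, so $\sigb^{\bn}\vo=\bo^{\bn}\vo=\vo$; hence every element of the real span of these $\vo$ is fixed by $\sigb^{\bn}$ for all $\bn\in N(f)\cdot\zd$. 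Dualizing, $\al^{\bn}$ acts trivially on $\FGo(\al)$ for all $\bn\in N(f)\cdot\zd$, i.e. $\FGo(\al)\subset\Fix_{\nfzd}(\al)$.

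The main obstacle I anticipate is the bookkeeping linking the complex kernel dimension of $f(\sigb)$ on $\VGC$ to the \emph{integral} structure: one needs that $\FGo(\al)$ — defined as the identity component of the real compact group $\FG(\al)$ — is exactly the subtorus dual to the maximal torsion-free quotient of the finitely generated abelian group $\rd/(\<f\>+\fbg)$, and that this quotient has rank $|\Om_\G(f)|$. This is essentially Pontryagin duality for finitely generated abelian groups combined with the eigenvalue computation, but it requires care that the integral lattice $\VGZ$ interacts correctly with the real eigenspace decomposition (the eigenvectors $\vo$ are not themselves integral, only certain real/rational combinations are, so one argues via $\VGR$ and real Jordan form of $f(\sigb)$ rather than naively over $\CC$). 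Once that identification is in hand, the root-of-unity argument for the containment in $\Fix_{\nfzd}(\al)$ is immediate from the definitions of $\Om(f)$, $\G(f)$, and $N(f)$.
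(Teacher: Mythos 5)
Your approach is essentially the paper's. Both proofs identify $\FGo(\al)$ with $\rho$ of the kernel of $f(\sigb)$ on the $\G$-periodic lattice, observe that this kernel is the real part of the span of $\{\vo : \bo\in\OG\cap\U(f)\}$, and then note that any $\bo\in\OG$ has root-of-unity coordinates so $\OG\cap\U(f)=\Om_\G(f)$, giving the dimension $|\Om_\G(f)|$. The paper states this more compactly (defining $W_\G(\CC)$ to be the span and asserting $\FGo(\al)=\rho\bigl(W_\G(\CC)\cap\lizdr\bigr)\cong\TT^{|\Om_\G(f)|}$), while you route through the finitely generated dual $\rd/(\<f\>+\fbg)$ and its torsion-free rank; this is the same underlying computation. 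Your observation that care is required to pass from the $\CC$-eigenspace picture to the integral lattice is well placed; the paper elides this.

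There is one genuine inferential gap, in the final containment. You write that for $\bn\in\G(f)\subseteq N(f)\cdot\zd$ one has $\bo^{\bn}=1$, and then conclude that $\sigb^{\bn}$ fixes the relevant span for \emph{all} $\bn\in N(f)\cdot\zd$. With the inclusion $\G(f)\subseteq N(f)\cdot\zd$ this does not follow: you have verified the fixed-point condition on the \emph{smaller} set $\G(f)$ and asserted it on the larger set. The problem traces back to the paper's own sentence ``we can find $N(f)>0$ with $\G(f)\subset N(f)\cdot\zd$'', which cannot hold for general finite-index $\G(f)$ with $N(f)>1$ (for example if $\G(f)$ contains a primitive vector, as happens whenever a single condition $\bo^{\bn}=1$ suffices to cut out $\G(f)$). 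What is always true, and what the argument actually requires, is the reverse inclusion $N(f)\cdot\zd\subset\G(f)$ (take $N(f)$ to be the exponent of $\zd/\G(f)$). With that inclusion, for any $\bn\in N(f)\cdot\zd\subset\G(f)$ and $\bo\in\Om_\G(f)\subset\Om(f)$ you get $\bo^{\bn}=1$, hence $\sigb^{\bn}\vo=\vo$, and $\rho$ of the real span is therefore contained in $\Fix_{\nfzd}(\al)$. So your argument is correct once the inclusion is read in the intended (and correct) direction.
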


\begin{proof}
   We denote by $W_{\G}(\CC)\subset\lizdc$ the linear span of
   $\{\vo:\om\in\Om_{\G}(f)\}$, where $\vo$ is defined in
   \eqref{eqn:vomega}.  Write $\VGR=W_{\G}(\CC)\cap\lizdr\subset W_f$
   for the real part of $W_{\G}(\CC)$. The dimension of $\VGR$ equals
   $|\Om_{\G}(f)|$, and
   $\FGo(\al)=\rho(\VGR)\cong\TT^{|\Om_{\G}(f)|}$. Since $\VGR\subset
   V_{\nfzd}(\RR)$, applying $\rho$ shows that $\FGo(\al)\subset
   \Fix_{\nfzd}(\al)$.
\end{proof}

\begin{lemma}
   \label{lem:card}
   For every $\eps>0$ there is an $M(\eps)>0$ with the following
   property:
   for each $\G\in\SF$, every
   $(\zd,\eps)$-separated set in $\FGo(\al)$ has cardinality $<M(\eps)$.
\end{lemma}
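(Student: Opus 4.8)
The plan is to exploit the fact, established in Lemma~\ref{lem:subtorus}, that $\FGo(\al)$ is a torus of dimension $|\Om_{\G}(f)|$ sitting inside $\Fix_{\nfzd}(\al)$. The key point is that the dimension of this torus is uniformly bounded: $|\Om_{\G}(f)|\le|\U(f)|<\infty$ for every $\G\in\SF$. Moreover $\FGo(\al)$ is always a subgroup of the \emph{fixed} group $\Fix_{\nfzd}(\al)$, which depends only on $f$ and not on $\G$. Thus all the tori $\FGo(\al)$, as $\G$ ranges over $\SF$, are closed connected subgroups of the single compact group $Y:=\Fix_{\nfzd}(\al)$, and each has dimension at most $r:=|\U(f)|$.

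First I would fix $\eps>0$. The number of $(\zd,\eps)$-separated points in a subset $S\subset X$ is the same as the number of $(Q_0,\eps)$-separated points for a suitable \emph{finite} window $Q_0\subset\zd$, provided $S$ consists of points that are $\nfzd$-periodic: indeed, if $x,y\in\Fix_{\nfzd}(\al)$ agree to within $\eps$ on a fundamental domain $Q_0$ for $\nfzd$, then by periodicity they agree to within $\eps$ on all of $\zd$, so no pair of distinct points of a $(\zd,\eps)$-separated subset of $Y$ can agree on $Q_0$. Hence every $(\zd,\eps)$-separated set in $Y$ is $(Q_0,\eps)$-separated, and the bound we seek follows once we bound the number of $(Q_0,\eps)$-separated points in any closed connected subgroup of $Y$ of dimension $\le r$.

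Next I would estimate this last quantity. The map $Y\to\TT^{Q_0}$ sending $x\mapsto(x_{\bn})_{\bn\in Q_0}$ is a continuous injective group homomorphism (injective because $Q_0$ is a fundamental domain for $\nfzd$ and the points are $\nfzd$-periodic), so it identifies $Y$ with a closed subgroup of the torus $\TT^{Q_0}$, and $\FGo(\al)$ with a subtorus of dimension $\le r$ inside it. A $(Q_0,\eps)$-separated set in $\FGo(\al)$ maps to an $\eps$-separated set (in the sup metric on $\TT^{Q_0}$) inside a subtorus of $\TT^{|Q_0|}$ of dimension $\le r$. Any such subtorus, with the metric induced from the ambient $\TT^{|Q_0|}$, is the image of $\RR^k$ ($k\le r$) under a linear map that is $1$-Lipschitz, so an $\eps$-separated subset of it has at most $C(\eps,r):=(\lceil\sqrt{r}/\eps\rceil+1)^{r}$ elements — a volume-packing count that depends only on $\eps$ and $r=|\U(f)|$, not on $\G$ or on $Q_0$. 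Setting $M(\eps)=C(\eps,r)+1$ finishes the argument.

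The main obstacle is making precise the claim that an $\eps$-separated subset of a low-dimensional subtorus of a high-dimensional torus has cardinality bounded independently of the ambient dimension. The point to get right is that the relevant subtorus is the image of some $\RR^k$ under a \emph{$1$-Lipschitz} linear map (because it is a subtorus of $\TT^{|Q_0|}$ with the sup metric, and the universal cover $\RR^k\to$ subtorus can be taken with integer-matrix periods so that the induced linear map on covers has operator norm controlled), whence an $\eps$-separated set pulls back to an $\eps$-separated set in $\RR^k$ lying in a bounded fundamental region, and a standard packing bound applies. Once that Lipschitz normalization is in place, everything else is routine.
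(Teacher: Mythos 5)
The first half of your argument — reducing $(\zd,\eps)$-separation to $(Q_0,\eps)$-separation via $\nfzd$-periodicity, and identifying $Y=\Fix_{\nfzd}(\al)$ with a closed subgroup of $\TT^{Q_0}$ — is correct and matches the paper's. The difficulty is in the last step, where you claim to bound the size of an $\eps$-separated set in a $k$-dimensional subtorus of $\TT^{|Q_0|}$ by a quantity $C(\eps,r)$ that depends only on $\eps$ and $r=|\U(f)|$ and not on $Q_0$. That claim is false in general. The issue is the Lipschitz normalization of the covering map: a $1$-dimensional subtorus $\{(t,kt):t\in\TT\}\subset\TT^2$ admits a $1$-Lipschitz surjection from $\RR$, but its period lattice then has covolume on the order of $k$, so the "bounded fundamental region" to which you appeal has diameter growing with the winding; correspondingly this subtorus contains $\eps$-separated sets of cardinality roughly $k/\eps$. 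A low-dimensional subtorus sitting in a high-dimensional torus can be metrically long, and the packing count cannot be made independent of the ambient.

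The fix is simpler than your argument. There is no need for a bound intrinsic to $r$: all the tori $\FGo(\al)$, as $\G$ ranges over $\SF$, lie inside the single fixed compact finite-dimensional torus $\Fix_{\nfzd}(\sig)\cong\TT^{|Q_0|}$ with $|Q_0|=N(f)^d$. Since this is one compact metric space, it has a maximal cardinality $M(\eps)$ of an $\eps$-separated subset, and by your periodicity observation every $(\zd,\eps)$-separated set in $\FGo(\al)$ is an $\eps$-separated subset of it. That $M(\eps)$ depends on $N(f)$ as well as $\eps$ is harmless, since $N(f)$ is fixed by $f$; the lemma only requires $M(\eps)$ to be uniform over $\G\in\SF$. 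This is exactly what the paper does. (An alternative repair along the lines you attempted: the subspace $W_\G(\CC)$ in Lemma~\ref{lem:subtorus} depends only on the subset $\Om_\G(f)$ of the fixed finite set $\Om(f)$, so there are only finitely many possibilities for $\FGo(\al)$ and one could take the maximum over that finite list — but this is still unnecessary extra work.)
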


\begin{proof}
   By Lemma \ref{lem:subtorus}, for every $\G\in\FG$ we have that
   $\FGo(\al)$ is a subtorus of the fixed finite-dimensional torus
   $\Fix_{\nfzd}(\sig)$. If $Q=\{0,\dots,N(f)-1\}^d$, there is an
   $M(\eps)>0$ such that every $(Q,\eps)$-separated set in
   $\Fix_{\nfzd}(\sig)$ has cardinality $<M(\eps)$. By periodicity,
   every $(\zd,\eps)$-separated set in $\Fix_{\nfzd}(\sig)$ (and hence
   in $\FGo(\al)$) has cardinality $<M(\eps)$.
\end{proof}

\begin{lemma}
   \label{lem:cosets}
   Let $Q\subset\zd$ and $\G$ be a finite-index subgroup of
   $\zd$. Suppose that $\eps>0$ and that $F\subset\FG(\al)$ is a
   $(Q,\eps)$-separated subset with cardinality $L$. Then $F$ intersects
   at least $L/M(\eps)$ distinct cosets of $\FGo(\al)$ in $\FG(\al)$,
   where $M(\eps)$ is given in Lemma \ref{lem:card}.
\end{lemma}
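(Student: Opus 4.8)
The plan is to exploit the fact that, within the finite group $\FG(\al)/\FGo(\al)$, each coset of $\FGo(\al)$ is a translate of the connected subtorus $\FGo(\al)$, and that Lemma~\ref{lem:card} caps how many $(\zd,\eps)$-separated points can lie in any one such translate. First I would partition the $(Q,\eps)$-separated set $F$ according to which coset of $\FGo(\al)$ each point lies in: write $F=\bigsqcup_{j}F_j$, where $F_j=F\cap(x_j+\FGo(\al))$ for coset representatives $x_j$, and let $r$ be the number of nonempty pieces, i.e.\ the number of distinct cosets that $F$ meets. Then $L=|F|=\sum_j|F_j|$, so it suffices to bound each $|F_j|$ by $M(\eps)$.

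The key observation is that a $(Q,\eps)$-separated subset of a coset $x_j+\FGo(\al)$ translates, via $y\mapsto y-x_j$, to a $(Q,\eps)$-separated subset of $\FGo(\al)$ of the same cardinality, since the metric $|x_{\bn}-y_{\bn}|$ on $\TT$ is translation-invariant. A $(Q,\eps)$-separated set is in particular $(\zd,\eps)$-separated because $Q\subset\zd$ means the defining condition is only easier to satisfy over all of $\zd$. Hence Lemma~\ref{lem:card} applies and gives $|F_j|<M(\eps)$ for each $j$. Therefore $L=\sum_{j=1}^r|F_j|<r\,M(\eps)$, which yields $r>L/M(\eps)$, the desired conclusion.

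I don't anticipate a genuine obstacle here; the lemma is essentially a pigeonhole argument once the two routine facts are in place — translation-invariance of separation within a coset, and the implication ``$(Q,\eps)$-separated $\Rightarrow$ $(\zd,\eps)$-separated''. The only point requiring a line of care is making sure the coset decomposition is well-defined: since $\FG(\al)/\FGo(\al)$ is finite (this is exactly what makes $\PG(\al)$ finite, as noted before Lemma~\ref{lem:count}), the number of cosets is finite and the partition of $F$ into the $F_j$ is legitimate. One should also note that the statement as phrased asks for ``at least $L/M(\eps)$'' cosets, so the strict inequality $r>L/M(\eps)$ from $L<rM(\eps)$ is even slightly stronger and in any case delivers what is claimed.
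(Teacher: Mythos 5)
Your proof is correct and takes the same (inevitable) pigeonhole approach that the paper's one-line proof — ``This is an immediate consequence of Lemma~\ref{lem:card}'' — alludes to; you simply supply the routine details (coset partition, translation-invariance of the coordinate metric, and the implication that $(Q,\eps)$-separation gives $(\zd,\eps)$-separation) that the authors leave implicit.
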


\begin{proof}
   This is an immediate consequence of Lemma \ref{lem:card}.
\end{proof}

\begin{proof}[Proof of Theorem \ref{th:main}]
   For a finite subset $Q\in\zd$, let $r_Q(\eps)$ denote the largest
   cardinality of a $(Q,\eps)$-separated set in $X$. According to
   \cite[Prop.\ 2.1]{D}, for every F{\o}lner sequence $\{L_n\}_{n\ge1}$
   in $\zd$, we have that
   \begin{equation}
        \label{eqn:liminf}
      \lim_{\eps\to 0}\,\liminf_{n\to\infty}\frac1{|L_n|}\log r_{L_n}(\eps)=\h(\al).
   \end{equation}

   Let $\{\G_n\}_{n\ge1}$ be a sequence in $\SF$ with $\<\G_n\>\to\infty$
   as $n\to\infty$. By Lemma \ref{lem:folner}, there exists a F{\o}lner
   sequence $\{Q_n\}_{n\ge1}$ of fundamental domains for the $\G_n$.

   Fix $\eps>0$ and use Lemma \ref{lem:span} to find a finite set
   $A_{\eps/3}\subset\zd$ such that $\Fix_{\G_n}(\al)$ is
   $(Q_n',\eps/3)$-spanning in $X$ for every $n\ge1$, where
   $Q_n'=\bigcap_{\bm\in A_{\eps/3}}(Q_n-\bm)$. Note that
   $\{Q_n'\}_{n\ge1}$ is again a F{\o}lner sequence in $\zd$ with
   $|Q_n'|/|Q_n|\to1$ as $n\to\infty$. We may assume $Q_n'\ne\emptyset$
   for all $n\ge1$.

   For all $n\ge1$ choose a maximal $(Q'_n,\eps)$-separated set
   $F_n\subset X$ with cardinality $r_{Q_n'}(\eps)$. We fix $n$ for the
   moment and choose for every $y\in F_n$ an element
   $z(y)\in\Fix_{\G_n}(\al)$ with $|y_{\bn}-z(y)_{\bn}|<\eps/3$ for all
   $\bn\in Q_n'$. The points $z(y)$ must be distinct for different $y$,
   so $F_n'=\{z(y):y\in F_n\}$ also has cardinality $|F_n|$. Lemma
   \ref{lem:cosets} shows that there is an $M(\eps)>0$ (which depends on
   $\eps$ but not on $n$) such that $F_n'$ intersects at least
   $|F_n'|/M(\eps/3)$ distinct cosets of $\Fix^{\circ}_{\G_n}(\al)$ in
   $\Fix_{\G_n}(\al)$. Hence
   \begin{displaymath}
      \sP_{\G_n}(\al)=|\Fix_{\G_n}(\al)/\Fix_{\G_n}^{\circ}(\al)|
      \ge\frac{|F'_n|}{M(\eps/3)}=\frac1{M(\eps/3)} r_{Q'_n}(\eps)
   \end{displaymath}
   for every $n\ge1$. It follows that
   \begin{displaymath}
      \liminf_{n\to\infty}\frac1{|\zd/\G_n|}\log \sP_{\G_n}(\al)
      \ge\liminf_{n\to\infty}\frac1{|Q_n|} \log r_{Q_n'}(\eps)
      =\liminf_{n\to\infty}\frac1{|Q_n'|} \log r_{Q_n'}(\eps).
   \end{displaymath}
   Letting $\eps\to 0$, invoking \eqref{eqn:liminf}, and combining with
   Theorem \ref{th:old} completes the proof.
\end{proof}

\section{Specification}\label{sec:specification}

Specification is a strong orbit tracing property that has many
uses. Ruelle \cite{R} investigated the extension of this notion to
topological $\zd$-actions. In \cite{LS} it was shown that expansive
algebraic $\zd$-actions with completely positive entropy satisfy several
flavors of specification. The proof made crucial use of the existence of
summable homoclinic points. By Proposition \ref{prop:wg}, this tool
remains available for the (nonexpansive) actions $\ardmf$ when $\U(f)$ is
finite. In this section we extend previous results to such actions.

\begin{definition}
   Let $\beta$ be a $\zd$-action by homeomorphisms of a compact metric
   space $(X,\rho)$.

   (1) The action $\beta$ has \textit{strong specification} if there
   exists, for every $\eps>0$, a number $p(\eps)>0$ with the following
   property: for every finite collection $\{Q_1,\dots,Q_t\}$ of finite
   subsets of $\zd$ with
   \begin{equation}
      \label{eqn:sep}
      \text{dist}(Q_j,Q_k) := \min_{\bm\in Q_j,\ \bn\in
      Q_k}\|\bm-\bn\|\ge p(\eps)\ \ (1\le j<k\le t),
   \end{equation}
   every collection $\{x^{(1)},\dots,x^{(t)}\}\subset X$, and every
   $\G\in\SF$ with
   \begin{displaymath}
      \text{dist}(Q_j+\mathbf{k},Q_k)\ge p(\eps) \ \ (1\le j<k\le
      t,\ \mathbf{k}\in\G\smallsetminus\{\boldsymbol{0}\}),
   \end{displaymath}
   there is a $y\in\FG(\beta)$ with
   \begin{equation}
      \label{eqn:spec}
      \rho(\beta^{\bn}y,\beta^{\bn}x^{(j)})<\eps \text{\ for all $\bn\in
      Q_j$, $(1\le j \le t)$}.
   \end{equation}

   (2) The action $\beta$ has \textit{homoclinic specification} if, for
   every $\eps>0$, there is a $p(\eps)>0$ such that for every finite
   collection $\{Q_1,\dots,Q_t\}$ of finite subsets of $\zd$ satisfying
   \eqref{eqn:sep} and every $\{x^{(1)},\dots,x^{(t)}\}\subset X$, there
   is a $y\in\Delta_{\beta}(X)$ satisfying \eqref{eqn:spec}.
\end{definition}

\begin{theorem}
   Let $d\ge2$ and $f\in\rd$ have finite $\U(f)$. Then $\ardmf$ has both
   strong specification and homoclinic specification.
\end{theorem}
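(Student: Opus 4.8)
The plan is to build both specification properties directly from the summable homoclinic machinery developed in Sections \ref{sec:homoclinic} and \ref{sec:symbolic}, following the template of \cite{LS} but substituting the summable homoclinic point $\xg$ with $g\in\mf\smallsetminus\<f\>$ (which exists by Proposition \ref{prop:mfproper}) for the exponentially decaying homoclinic point that was available in the expansive case. The key point is that the covering map $\xig\colon V_K\to X$ of Lemma \ref{lem:onto} is surjective, so any target orbit segment can be realized as the image of a bounded integer-valued configuration, and then one edits that configuration in disjoint windows.

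First I would prove homoclinic specification, since it is the simpler statement. Given $\eps>0$, choose a finite set $B_\eps\subset\zd$ with $\sum_{\bn\notin B_\eps}|\wg_{\bn}|<\eps/K$ where $K=\sum_{\bn}|f_{\bn}|$; set $p(\eps)$ to be (roughly) the diameter of $B_\eps$. Given disjoint sets $Q_1,\dots,Q_t$ satisfying \eqref{eqn:sep} and targets $x^{(1)},\dots,x^{(t)}$, pull each $x^{(j)}$ back to some $v^{(j)}\in V_K$ with $\xig(v^{(j)})=x^{(j)}$ (Lemma \ref{lem:onto}), then form $v\in\lizdz$ by taking $v=v^{(j)}$ on a slightly enlarged window around $Q_j$ and $v=0$ elsewhere. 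Because the windows are separated by more than $p(\eps)$ and $\wg$ is summable with tail $<\eps/K$ outside $B_\eps$, the convolution $\wg*v$ at a point $\bn\in Q_j$ differs from $\wg*v^{(j)}$ by at most $K\cdot(\eps/K)=\eps$ in each coordinate; hence $y=\xig(v)=\rho(\wg*v)\in\Da(X)$ satisfies \eqref{eqn:spec}. (One should use the metric $\rho$ on $X$ comparably to the $\ell^\infty$ distance on finitely many coordinates; shrinking $\eps$ by a fixed factor absorbs this.)

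For strong specification I would combine the above with the periodicization trick already used in Lemma \ref{lem:span}: after constructing the integer configuration $v$ as above, replace it by its $\G$-periodic extension $v'$ agreeing with $v$ on a fundamental domain chosen to contain all the windows. The extra hypothesis $\operatorname{dist}(Q_j+\mathbf k,Q_k)\ge p(\eps)$ for $\mathbf k\in\G\setminus\{\mathbf 0\}$ guarantees that the $\G$-translates of the perturbed windows do not interfere with each other near any $Q_j$, so the same tail estimate on $\wg$ gives $|\xigb(v')_{\bn}-\xigb(v^{(j)})_{\bn}|<\eps$ on $Q_j$. Then $y=\rho(\xigb(v'))$ lies in $\FG(\ardmf)$ because $v'\in\VGZ$ and $\xig$ maps integer configurations into $X$ commuting with the shift, and it satisfies \eqref{eqn:spec}. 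The main obstacle is bookkeeping: one must verify that a single fundamental domain for $\G$ can be chosen large enough to contain the $B_\eps$-enlargements of all the $Q_j$ simultaneously without overlaps, and that the separation hypotheses propagate to the enlarged windows under both ordinary translation and $\G$-translation. This is where the quantitative choice of $p(\eps)$ (depending on $B_\eps$ and hence on $\eps$ alone, not on $t$, $\G$, or the $Q_j$) has to be made carefully, exactly as in the expansive argument of \cite{LS}, with summability of $\wg$ playing the role formerly played by exponential decay.
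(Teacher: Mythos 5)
Your proposal is correct and takes essentially the same approach as the paper. The paper's proof is terser — it pulls each target $x^{(j)}$ back through the surjection $g(\al)\colon X\to X$ to get $y^{(j)}$ and then simply cites the splicing argument of \cite[Thm.\ 5.2]{LS} with the fundamental homoclinic point replaced by $x^{(g)}$ — whereas you inline that argument directly using $\xig\colon V_K\to X$ (which composes the two steps, since $\xig\bigl(f(\sigb)w\bigr)=g(\al)\rho(w)$), choose $p(\eps)$ from the summable tail of $\wg$ in place of the exponential tail available in the expansive case, and use the $\G$-periodization of the spliced integer configuration exactly as in Lemma \ref{lem:span}; the underlying ideas and lemmas invoked are the same.
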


\begin{proof}
   Let $\al=\ardmf$ and $X=\xrdmf$. Using the notation of Proposition
   \ref{prop:wg}, choose $g\in\mf\smallsetminus\<f\>$, with
   corresponding $x^{(g)}\in\Das$. By the proof of Lemma \ref{lem:onto},
   we can find $y^{(j)}\in X$ with $g(\al)(y^{(j)})=x^{(j)}$ for $1\le
   j\le t$. The proof of \cite[Thm.\ 5.2]{LS}, applied to the $y^{(j)}$
   and replacing the fundamental homoclinic point with $x^{(g)}$,
   now yields the required $y$.
\end{proof}

\begin{remark}
   For $d=1$ and $\U(f)\ne\emptyset$, both strong specification and
   homoclinic specification always fail, although a weaker form still
   holds \cite{L2}. This again illustrates the difference between $d=1$
   and $d\ge2$.
\end{remark}

\section{Further remarks}\label{sec:further}

An alternative approach to proving Theorem \ref{th:main} uses Gelfond's
deep results on algebraic numbers (see \cite[p. 28]{G}). Let
$\bxi=(\xi_1,\dots,\xi_d)$ have algebraic coordinates, and recall that
these are called \textit{multiplicatively independent} if the only
$\bn\in\zd$ for which $\bxi^{\bn}:=\xi_1^{n_1}\cdots\xi_d^{n_d}=1$ is
$\bn=\boldsymbol{0}$.

\begin{theorem}[Gelfond, {\cite[Thm. III]{G}}]
   Suppose that $\bxi=(\xi_1\dots,\xi_d)$ has algebraic coordinates that
   are multiplicatively independent. Then for every $\eps>0$ there are
   only finitely many $\bn\in\zd$ for which
   $|\bxi^{\bn}-1|<e^{-\eps\|\bn\|}$, where $\|\bn\|=\max\{|n_1|,\dots,|n_d|\}$.
\end{theorem}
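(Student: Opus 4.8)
The plan is to deduce this from the theory of lower bounds for linear forms in logarithms of algebraic numbers (Gelfond's method and its several‑variable strengthening due to Baker). The point is that those bounds are merely \emph{polynomial} in $\|\bn\|$, while the hypothesis $|\bxi^{\bn}-1|<e^{-\eps\|\bn\|}$ demands exponential smallness; since a fixed negative power of $\|\bn\|$ eventually exceeds $e^{-\eps\|\bn\|}$, only finitely many $\bn$ can satisfy the inequality.

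First I would set up the reduction. Fix branches $\log\xi_1,\dots,\log\xi_d$, say the principal ones with $|\Im\log\xi_j|\le\pi$, together with $\log(-1)=\pi i$, so that $2\pi i=2\log(-1)$. Since $\bxi^{\bn}=e^{\,n_1\log\xi_1+\dots+n_d\log\xi_d}$, there is an integer $n_0$, unique once we also demand that
\[
   \Lambda:=n_1\log\xi_1+\dots+n_d\log\xi_d-2\pi i\,n_0
\]
be the principal logarithm of $\bxi^{\bn}$, and comparing imaginary parts gives $|n_0|\le\frac d2\|\bn\|+1$, so $H:=\max(|n_0|,|n_1|,\dots,|n_d|)\ll\|\bn\|$. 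If $|\bxi^{\bn}-1|<\tfrac12$ then $\bxi^{\bn}$ lies in a fixed neighbourhood of $1$, whence $|\Lambda|=|\mathrm{Log}(\bxi^{\bn})|\le 2|\bxi^{\bn}-1|$; in particular $|\Lambda|$ would tend to $0$ along any hypothetical infinite family. Crucially, $\Lambda\ne0$ whenever $\bn\ne\boldsymbol{0}$: from $\Lambda=0$ we get $\bxi^{\bn}=1$, forcing $\bn=\boldsymbol{0}$ by multiplicative independence. Thus $\Lambda$ is a \emph{nonzero} $\ZZ$-linear form in the $d+1$ logarithms $\log\xi_1,\dots,\log\xi_d,\log(-1)$ of algebraic numbers, with coefficient vector of height $H$.

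Next I would invoke Baker's theorem on linear forms in logarithms (the multivariable refinement of Gelfond's transcendence measure; see \cite{G} and its later sharpenings): there are constants $c'>0$ and $\kappa\ge1$ depending only on $d$ and on the degrees and heights of $\xi_1,\dots,\xi_d$ — and \emph{not} on $\bn$ — such that every nonzero form of this type satisfies $|\Lambda|\ge c'H^{-\kappa}$. Combining $|\Lambda|\le 2|\bxi^{\bn}-1|$, $H\ll\|\bn\|$, and $|\Lambda|\ge c'H^{-\kappa}$ (and enlarging $\kappa$, shrinking the constant) yields $|\bxi^{\bn}-1|\ge c''\|\bn\|^{-\kappa}$ for \emph{every} $\bn\ne\boldsymbol{0}$. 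Hence $|\bxi^{\bn}-1|<e^{-\eps\|\bn\|}$ forces $\eps\|\bn\|-\kappa\log\|\bn\|<\log(1/c'')$, and since the left side tends to $+\infty$, this can hold only for $\|\bn\|$ below a bound depending on $\eps$. Therefore the set of such $\bn$ is finite.

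\textbf{Main obstacle.} All the genuine difficulty is concentrated in the lower bound $|\Lambda|\ge c'H^{-\kappa}$: this is exactly the transcendence‑measure machinery (auxiliary analytic functions, Siegel's lemma, and an extrapolation/zero‑estimate argument), and I would cite it rather than reprove it. For $d=1$ the classical two‑logarithm (Gelfond–Schneider) case already suffices, but for $d\ge2$ — the situation actually needed here — one requires the general Baker estimate. Everything else is elementary: the choice of $n_0$ with $|n_0|\ll\|\bn\|$, the passage between $|\bxi^{\bn}-1|$ and $|\Lambda|$, and the non‑vanishing of $\Lambda$, which is precisely multiplicative independence together with $\log(-1)\ne0$.
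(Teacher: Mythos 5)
The paper does not prove this statement; it cites it verbatim as Theorem~III of Gelfond's book \cite{G} and uses it as a black box in the discussion of Section~\ref{sec:further}. So there is no in-paper proof to compare against. That said, your derivation is a correct and standard way to recover the result from the modern theory: the reduction to a nonzero integer linear form $\Lambda$ in the $d+1$ logarithms $\log\xi_1,\dots,\log\xi_d,\log(-1)$, the observation that multiplicative independence is exactly what guarantees $\Lambda\ne0$ for $\bn\ne\boldsymbol{0}$, the bound $|n_0|\ll\|\bn\|$ from comparing imaginary parts, the elementary estimate $|\Lambda|\le 2|\bxi^{\bn}-1|$ near $1$, and the appeal to a lower bound of the form $|\Lambda|\ge c'H^{-\kappa}$ all check out, and the final comparison of a polynomial lower bound with the hypothesized exponential upper bound clinches finiteness.

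Two small remarks. First, attributing the needed transcendence measure to ``Baker's theorem'' is slightly anachronistic: Gelfond's Theorem~III is itself a several-variable measure, obtained by Gelfond's own extension of the Gelfond--Schneider auxiliary-function method well before Baker; Baker's contribution was sharper, fully effective constants, which you do not need here since any exponent $\kappa$ depending only on $d$ and the degrees and heights of the $\xi_j$ suffices. Second, it is worth saying explicitly that the constants $c',\kappa$ are uniform over all $\bn$ precisely because the algebraic numbers $\xi_1,\dots,\xi_d$ (and hence their degrees and heights) are fixed once and for all; only the integer coefficient vector $(n_0,n_1,\dots,n_d)$, of height $H\ll\|\bn\|$, varies. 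With that understood, the argument is complete.
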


Let $f\in\rd$ have finite $\U(f)$. Define $\log_0 t$ for $t\ge0$ to be
$\log t$ if $t>0$ and $0$ if $t=0$. According to Lemma \ref{lem:count},
for each $\G\in\SF$,
\begin{equation}
   \label{eq:riemann-sum}
   \frac1{|\OG|} \log \PG(\ardmf)= \frac1{|\OG|} \sum_{\bo\in\OG}\log_0|f(\bo)|.
\end{equation}
Now $\log|f|$ has only finitely many logarithmic singularities, and by
Proposition \ref{prop:alg} these all have algebraic coordinates. We can
therefore use Gelfond's result to control the few potentially large
negative values of $\log_0|f|$ for $\bo\in\OG$ near one of these
singularities, to show that the Riemann sums in \eqref{eq:riemann-sum}
will converge to the limit $\m(f)=\h(\ardmf)$.

To make a similar argument work when $\U(f)$ is infinite, we would need
an estimate of the form $\text{dist}(\U(f),\bo)\ge e^{-\eps\cdot
o(\bo)}$, where $\bo=(\om_1,\dots,\om_d)\notin\U(f)$ has coordinates
which are roots of unity, and $o(\bo)$ denotes the order of $\bo$ in
$\sd$. Such estimates, however, do not appear to be available.

We remark that if we replace averages of $\log|f|$ over the finite
subgroups $\OG$ by averages over a sequence $\{K_n\}$ of compact
connected subgroups in $\sd$ that become uniformly distributed, then a
result of Lawton \cite{La} shows that for every nonzero $f\in\rd$ we do
have convergence,
\begin{displaymath}
   \int_{K_n}\log|f|\,d\lambda_{K_n} \to
   \int_{\sd}\log|f|\,d\lambda_{\sd} \text{ as $n\to\infty$},
\end{displaymath}
and so the diophantine issues disappear.

\section*{Acknowledgment}
We are  grateful to Marius van der Put for the proof of Proposition \ref{prop:real-alg}.
K.S. and E.V. gratefully acknowledge the support and hospitality
of Max-Planck-Institut f\"ur Mathematik in Bonn, Germany, where part of this
work was done. D.L. thanks the Mathematics Department at Yale
University for their support during a visit.

\end{document}